\newcommand{\C}{\mathbb{C}} 
\newcommand{\F}{\mathbb{F}} 
\newcommand{\PP}{\mathbb{P}} 
\newcommand{\R}{\mathbb{R}} 
\newcommand{\eqr}[1]{~\mbox{$(${\rm \ref{#1}}$)$}}
\newcommand{\rank}{\mathrm{rank}\,}
\newcommand{\vier}[4]{\left[ \begin{array}{ccc}
                   #1 &\;& #2 \\ #3 &\;& #4 \end{array} \right]}
\newcommand{\kerRs}{{\rm ker}_{\F(s)}\,}
\newcommand{\imRs}{{\rm im}_{\F(s)}\,}
\newcommand{\sep}{\;|\;}
\begin{document}

\title*{Pole Placement with Fields of Positive 
   Characteristic} 
  \author{Elisa Gorla and Joachim Rosenthal} 
  \institute{Elisa Gorla \at Department of  Mathematics,
  University of Basel, Basel, Switzerland\\
  \url{http://www.math.unibas.ch} \and Joachim Rosenthal \at
  Institute of Mathematics, University
  of Z\"urich,  Z\"urich,  Switzerland\\
  \url{http://www.math.uzh.ch/aa}}

\maketitle

\abstract{The pole placement problem belongs to the classical
  problems of linear systems theory. It is often assumed that the
  ground field is the real numbers $\R$ or the complex numbers
  $\C$.\\
  The major result over the complex numbers derived in 1981 by
  Brockett and Byrnes states that arbitrary static pole placement
  is possible for a generic set of $m$-inputs, $p$-outputs and
  McMillan degree $n$ system as soon as $mp\geq n$. Moreover the
  number of solutions in the situation $mp=n$ is an
  intersection number first computed by Hermann Schubert in the
  19th century. \\
  In this paper we show that the same result with slightly
  different proofs holds over any algebraically closed field.}

\section{Introduction}               \label{sec:1}

Let $\F$ be an arbitrary field and let $A,B,C$ be matrices of size
$n\times n$, $n\times m$ and $p\times n$, with entries in $\F$. These
matrices define a discrete time dynamical system through the
equations:
\begin{equation}                     \label{system}
  \begin{array}{ccl}
  x(t+1)&=&Ax(t)+Bu(t)\\
  y(t)&=&Cx(t).
  \end{array}
\end{equation}

An $m\times p$ matrix $K$ with entries in $\F$ defines the
feedback law:
\begin{equation}                   \label{feed-law}
u(t)=Ky(t).
\end{equation}
Applying\eqr{feed-law} to the system\eqr{system}, one gets the closed
loop system:
\begin{equation}
 x(t+1)=\left( A+BKC\right)x(t).
\end{equation}

The static output pole placement problem asks for conditions on
the matrices $A,B,C$ which guarantee that the characteristic
polynomial of the closed loop system, i.e., the characteristic
polynomial of the matrix $\left( A+BKC\right)$ can be made
arbitrary.

We can explain this problem also in terms of the so-called pole
placement map. For this, identify the set of monic polynomials of
degree $n$ of the form:
$$
s^n+a_{n-1}s^{n-1}+\cdots+a_1s+a_0\in \F[s]
$$
with  the vector space $\F^n$. Then we are seeking conditions
which guarantee that the pole placement map:
\begin{equation}                       \label{pole-map}
  \chi_{(A,B,C)}:\, \F^{m\times p}\longrightarrow\F^n,\hspace{3mm}
  K\longmapsto \det(sI-A-BKC)
\end{equation}
is surjective, or at least that the image contains a non-empty
Zariski-open set.

Many facets of this problem have been studied in the literature
and the reader is referred to~\cite{by89,ro98,wa92,wa96} where
also more references to the literature can be found. 

If the base field is the complex numbers, then the major result
is due to Brockett and Byrnes~\cite{br81}:
\begin{theorem}                  \label{BrBy}
  If the base field $\F$ equals $\C$, the complex numbers, then  
  $\chi$ is surjective for generic matrices $A,B,C$ if 
  and only if $mp\geq n$. 
Moreover if $mp=n$ and $\chi$ is surjective, then the general
fiber $\chi^{-1}(\phi)$ has  cardinality
\begin{equation}                       \label{cond-1}
d(m,p)= \frac{1!2!\cdots (p-1)!(mp)!}{m!(m+1)!\cdots(m+p-1)!}.
\end{equation}
\end{theorem}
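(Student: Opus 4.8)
\emph{Proof plan.} The plan is to identify the pole placement map with a linear projection restricted to a Pl\"ucker-embedded Grassmannian, and then to read off all three assertions from elementary intersection theory over $\C$. \emph{Reduction to polynomial matrices.} A generic triple $(A,B,C)$ gives a minimal (controllable and observable) realization, so $G(s):=C(sI-A)^{-1}B$ is a $p\times m$ strictly proper rational matrix of McMillan degree $n$; fix a right coprime factorization $G(s)=N(s)D(s)^{-1}$ with $D$ column reduced, column degrees summing to $n$, and $N$ of strictly smaller column degrees, and let $P(s)$ be the $(m+p)\times m$ polynomial matrix obtained by stacking $D(s)$ on top of $N(s)$. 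From $sI-A-BKC=(sI-A)\big(I-(sI-A)^{-1}BKC\big)$, the identity $\det(I-XY)=\det(I-YX)$, and $\det D(s)=d_0\det(sI-A)$ for a nonzero scalar $d_0$ (legitimate because the realization is minimal), one obtains
\[
\det(sI-A-BKC)=\det(sI-A)\,\det\big(I_m-KG(s)\big)=\frac{1}{d_0}\det\big(D(s)-KN(s)\big)=\frac{1}{d_0}\det\big([\,I_m\mid -K\,]\,P(s)\big).
\]
Since $D-KN$ remains column reduced with the same leading coefficient matrix as $D$, the right-hand side is monic of degree exactly $n$ for every $K$. As $K$ ranges over $\F^{m\times p}$ the row spaces of $[\,I_m\mid -K\,]$ fill the dense open Schubert cell $U$ of the Grassmannian $X:=\mathrm{Gr}(m,m+p)$ of $m$-planes in $\F^{m+p}$, and $\chi_{(A,B,C)}$ is the composition of the parametrization $\F^{m\times p}\to U$ (an isomorphism onto $U$) with $\Lambda\mapsto\frac{1}{d_0}\det(\Lambda P(s))$.

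\emph{Linearization.} By the Cauchy--Binet formula,
\[
\det\big(\Lambda P(s)\big)=\sum_{|S|=m}p_S(\Lambda)\,q_S(s),\qquad q_S(s):=\det\big(P(s)_S\big)\in\F[s]_{\le n},
\]
where $S$ runs over the $m$-element subsets of $\{1,\dots,m+p\}$, $P(s)_S$ is the $m\times m$ submatrix on the rows indexed by $S$, the $p_S(\Lambda)$ are the Pl\"ucker coordinates of the row space of $\Lambda$, and the $q_S$ are fixed polynomials of degree at most $n$. Hence each coefficient of $\det(\Lambda P(s))$, viewed as a polynomial in $s$, is a fixed linear form in the $p_S$, so $\Lambda\mapsto[\det(\Lambda P(s))]$ is the restriction to the Pl\"ucker-embedded $X\hookrightarrow\PP(\wedge^m\F^{m+p})$ of a linear projection $\pi_P$ onto $\PP(\F[s]_{\le n})\cong\PP^n$; over $U$, passing to the affine chart ``leading coefficient $=1$'' recovers $\chi_{(A,B,C)}$.

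\emph{The three cases.} If $mp<n$, then for every system the image of $\chi_{(A,B,C)}$ lies in $\pi_P(X)$, of dimension at most $\dim X=mp<n$, so $\chi$ is never surjective. If $mp=n$, the crucial genericity input is that for a generic $(A,B,C)$ the center of $\pi_P$ is disjoint from $X$, i.e.\ $\det(\Lambda P(s))\not\equiv 0$ for every $\Lambda\in X$: a Schubert-type open condition saying that the $\F(s)$-column space of $P(s)$ is in general position with respect to every $p$-dimensional $\F$-subspace of $\F^{m+p}$. Granting it, $\pi_P|_X\colon X\to\PP^n$ is a finite morphism between varieties of equal dimension $n$, hence surjective, and its degree equals the degree of $X$ in the Pl\"ucker embedding; over $\C$ the general fiber is reduced, so its cardinality is that degree. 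Schubert's classical computation (equivalently, the hook length count of the standard Young tableaux of the $m\times p$ rectangle) evaluates it to $d(m,p)$. Finally $X\setminus U$ is a Schubert divisor, so its image under the finite map $\pi_P|_X$ is a proper closed subset of $\PP^n$, whence the general fiber meets $U$ and consists of $d(m,p)$ honest feedback matrices. For $mp>n$ one deduces surjectivity from the same analysis together with a compactification and degree argument in the spirit of Brockett and Byrnes; this --- and the verification that every target is attained from within the cell $U$ --- is the technically most delicate step.

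\emph{Main obstacle.} The Schur complement and Cauchy--Binet manipulations are formal, and the evaluation of the Pl\"ucker degree of $X$ as $d(m,p)$ is classical Schubert calculus. The real work is the genericity statement turning ``generic $(A,B,C)$'' into ``center of $\pi_P$ misses $X$'' (and its weaker spanning version, behind dominance when $mp>n$), together with checking that the distinguished fiber lands in the affine cell rather than on its boundary. Over $\C$ one additionally exploits characteristic $0$ to pass from ``degree of a finite morphism'' to ``number of points in a general fiber''; removing that crutch --- and re-establishing the degree of $X$ and the genericity statements without characteristic-zero input --- is exactly what necessitates the ``slightly different proofs'' announced for arbitrary algebraically closed fields.
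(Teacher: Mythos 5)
Your overall architecture is the same as the paper's: reduce $\det(sI-A-BKC)$ to a full-size minor expansion of a coprime polynomial factorization, linearize via Cauchy--Binet into Pl\"ucker coordinates, view $\chi$ as the restriction to the Grassmannian of a central projection $\PP^N\dashrightarrow\PP^n$, and get finiteness, surjectivity and the count $d(m,p)=\deg\mathrm{Grass}(m,\F^{m+p})$ from the condition that the center misses the Grassmannian. That condition is exactly the paper's notion of a non-degenerate system, and your easy direction ($mp<n$ implies $\dim$ of the image is at most $mp<n$) is fine.

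The genuine gap is that you ``grant'' the one statement that carries all the content: that for a generic $(A,B,C)$ with $n\le mp$ the center of the projection is disjoint from the Grassmannian, i.e.\ $\det(\Lambda P(s))\not\equiv 0$ for every full-rank $\Lambda$. This is not a routine ``Schubert-type open condition'' to be waved through: one must prove (i) that the degenerate systems form a Zariski-closed subset of $\F^{n^2+n(m+p)}$ --- the paper does this by rewriting $\det\vier{K_1}{K_2}{N(s)}{D(s)}$ as $\det\vier{sI-A}{B}{K_2C}{K_1}$ (Lemmas~\ref{lemma-1} and~\ref{lemma-2}) and projecting the incidence variety $S$ from $\mathrm{Grass}(m,\F^{m+p})\times\F^{n^2+n(m+p)}$ using properness of the Grassmannian; and (ii) that at least one non-degenerate system exists, which the paper supplies with the osculating normal curve (and which, as Section~\ref{sec:3} shows, genuinely fails for that example in small positive characteristic --- evidence that existence is not automatic). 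Without (i) and (ii) the statement ``generic systems are non-degenerate'' is unproved and the whole $mp=n$ case is conditional. Two smaller points: your argument that the general fiber lies in the affine cell $U$ only gives dominance of $\chi$, whereas the paper's degree argument (every maximal minor of $[N(s)\ D(s)]$ has degree $<n$ except $\det D(s)$, so a geometric solution for a degree-$n$ target forces $K_1$ invertible) puts \emph{every} fiber over a monic degree-$n$ polynomial inside $U$ and hence gives actual surjectivity; and the $n<mp$ surjectivity case is deferred entirely to ``a compactification and degree argument,'' where the paper's Remark~\ref{n<mp} at least indicates the dimension count of the dependent compensators that makes it work.
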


In the next section we will go over the proof of
Theorem~\ref{BrBy} in the situation when the base field $\F$ is
algebraically closed and has characteristic zero. In
Section~\ref{sec:3} we will address the difficulties which
occur in positive characteristic. The main result of the paper is
a proof that Theorem~\ref{BrBy} holds over any algebraically closed
field in the case $n=mp$.

\section{Connection to Geometry and a proof 
of Theorem~\ref{BrBy}  in characteristic zero} \label{sec:2}

Consider the transfer function $G(s):=C(sI-A)^{-1}B$ and
a left coprime factorization:
$$
G(s)=D^{-1}(s)N(s)=C(sI-A)^{-1}B.
$$
Over any field $\F$ we have the property that the $p\times (m+p)$
matrix $\left[ N(s)\ D(s)\right]$ has rank $p$ when evaluated at
an arbitrary element of the algebraic closure $\bar{\F}$ of $\F$.
In other words if $\lambda\in\bar{\F}$ then
$$
\rank\left[ N(\lambda)\ D(\lambda)\right]=p.
$$
It was the insight of Hermann and Martin~\cite{ma78} to realize
that every linear system $G(s)$ naturally defines a rational map
into the Grassmann variety Grass$(p,\F^{m+p})$:
$$
h:\ \mathbb{P}^1\longrightarrow \mathrm{Grass}(p,\F^{m+p}),\
s\longmapsto \mathrm{rowsp}\left[ N(s)\ D(s)\right].
$$
The map $h$ does not depend on the coprime factorization, and two
different linear systems $G_1(s)$ and $G_2(s)$ have different
associated rational maps. By the previous remark, the map is well
defined for every element $\lambda\in\bar{\F}$.  For this reason one
usually refers to $h$ as the Hermann-Martin map associated to the
linear system $G(s)$.

In order to arrive at an algebraic geometric formulation of the
pole placement problem, consider a left coprime factorization
$G(s)=D^{-1}(s)N(s)$ with the property that $\det (sI-A)=\det
D(s)$.  Then it is well known that the closed loop characteristic
polynomial can also be written as:
\begin{equation}\label{polypol}
  \det(sI-A-BKC)=\det\vier{I}{K}{N(s)}{D(s)}.
\end{equation}
Assume now that a desired closed loop characteristic polynomial
$\phi(s)$ factors over the algebraic closure as:
$$
\phi(s)=\prod_{i=1}^n(s-s_i),\ s_i\in\bar{\F},\ i=1,\ldots,n.
$$
The condition $\det(sI-A-BKC)=\phi(s)$ then translates into the
geometric condition:
$$
\mathrm{rowsp}\left[I\ K\right]\bigcap
\mathrm{rowsp}\left[N(s_i)\ D(s_i)\right]\neq \{ 0\}, \
i=1,\ldots,n.
$$

This formulation is closely connected to a theorem due to
Hermann Schubert:
\begin{theorem} \label{Schub} Given $n$ $p$-dimensional subspaces
  $U_i\subset \C^{m+p}$. If $n\leq mp$, then there is an
  $m$-dimensional subspace $V\subset \C^{m+p}$ such that
  \begin{equation} \label{cond-2} V \bigcap U_i\neq \{ 0\}, \
    i=1,\ldots,n.
  \end{equation}
  Moreover if $n=mp$ and the subspaces $U_i$ are in ``general
  position'', then there are exactly $d(m,p)$ (see
  Equation\eqr{cond-1}) different solutions $V\subset \C^{m+p}$
  satisfying Condition\eqr{cond-2}.
\end{theorem}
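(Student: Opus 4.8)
The plan is to reformulate Theorem~\ref{Schub} as an intersection problem on a Grassmann variety and then to settle it by classical Schubert calculus. Let $X:=\mathrm{Grass}(m,\C^{m+p})$ be the Grassmannian of $m$-dimensional subspaces of $\C^{m+p}$; it is a smooth irreducible projective variety of dimension $mp$, and in its Plücker embedding $X\subset\PP(\bigwedge^m\C^{m+p})$ the hyperplane class is the special Schubert class $\sigma_1$. For a $p$-dimensional subspace $U\subset\C^{m+p}$ set $\Sigma(U):=\{V\in X:V\cap U\neq\{0\}\}$. Fixing a generator $\omega_U$ of the line $\bigwedge^p U$, the assignment $\omega\mapsto\omega\wedge\omega_U$ is a nonzero linear form on $\bigwedge^m\C^{m+p}$, and for $V\in X$ with generator $\omega_V$ of $\bigwedge^m V$ one has $V\cap U\neq\{0\}$ exactly when $\omega_V\wedge\omega_U=0$; hence $\Sigma(U)=X\cap H_U$ for a hyperplane $H_U$ of $\PP(\bigwedge^m\C^{m+p})$, and in particular $[\Sigma(U)]=\sigma_1$. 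By the very definition of the $\Sigma(U_i)$, a subspace $V\in X$ satisfies Condition\eqr{cond-2} for every $i$ if and only if $V\in\Sigma(U_1)\cap\cdots\cap\Sigma(U_n)=X\cap H_{U_1}\cap\cdots\cap H_{U_n}$.

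With this translation the existence assertion is immediate: $H_{U_1}\cap\cdots\cap H_{U_n}$ is a linear subspace $L$ of codimension at most $n$, so by the projective dimension theorem the intersection $X\cap L$ is non-empty, of dimension at least $mp-n$, as soon as $n\leq mp$. This produces the desired $V$ with no hypothesis whatsoever on the position of the $U_i$.

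There remains the count when $n=mp$. For $U_1,\ldots,U_{mp}$ in general position the set $Z:=\Sigma(U_1)\cap\cdots\cap\Sigma(U_{mp})$ is finite, and the number of its points counted with multiplicity is, by definition, the degree of $X$ in the Plücker embedding, i.e.\ the intersection number $\sigma_1^{mp}$. I would evaluate $\sigma_1^{mp}$ by Schubert calculus: by Pieri's rule, multiplication by $\sigma_1$ adds a single box to a Young diagram in all admissible ways, so $\sigma_1^{mp}$ is the number of maximal chains in Young's lattice from the empty diagram up to the full $m\times p$ rectangle, that is, the number of standard Young tableaux of that rectangular shape; a short computation with the hook-length formula turns this into $\frac{1!\,2!\cdots(p-1)!\,(mp)!}{m!\,(m+1)!\cdots(m+p-1)!}=d(m,p)$. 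The step I expect to be the real obstacle is to upgrade ``$d(m,p)$ points counted with multiplicity'' to ``$d(m,p)$ \emph{distinct} points'': since the hyperplanes $H_U$ sweep out only a proper subfamily of all hyperplanes (the dual Grassmannian), a bare Bertini argument does not apply. Over $\C$ one instead invokes Kleiman's transversality theorem for the transitive action of $\mathrm{GL}_{m+p}(\C)$ on the homogeneous space $X$: applied inductively, and exploiting that the singular locus of each $\Sigma(U)$ has codimension at least two, it shows that for generic $U_i$ all the successive intersections are smooth of the expected dimension, so that $Z$ is a reduced zero-dimensional scheme and therefore consists of exactly $d(m,p)$ points. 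I would conclude by pointing out that this transversality step is precisely the place where characteristic zero is used, so that obtaining the same conclusion over an arbitrary algebraically closed field — the goal announced in the introduction — requires replacing it, which is taken up in Section~\ref{sec:3}.
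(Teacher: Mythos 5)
Your argument is correct in outline, but it is worth noting that the paper does not actually prove Theorem~\ref{Schub}: it is presented as a classical result of Schubert, with the rigorous verification delegated to the literature (Kleiman's survey~\cite{kl76}). What you have written is essentially a condensed version of that standard modern verification, so the comparison is between your sketch and the cited sources rather than with an argument in the paper itself. Your three steps are the right ones: the identification of $\Sigma(U)$ with a hyperplane section of the Pl\"ucker embedding (so that existence for $n\leq mp$ follows from the projective dimension theorem with no genericity hypothesis), the computation of $\deg\mathrm{Grass}(m,\C^{m+p})=\sigma_1^{mp}$ as the number of standard Young tableaux of the $m\times p$ rectangle via Pieri and the hook-length formula, and the use of Kleiman's transversality theorem for the transitive $\mathrm{GL}_{m+p}(\C)$-action to pass from ``$d(m,p)$ points with multiplicity'' to ``$d(m,p)$ distinct points.'' You correctly flag the one place where care is needed (the hyperplanes $H_U$ range only over the dual Grassmannian, so Bertini is unavailable), and your remedy is the standard one; for completeness one should record that the singular locus of $\Sigma(U)$ is $\{V:\dim(V\cap U)\geq 2\}$, of codimension $4$ in the Grassmannian, so that Kleiman applied separately to the smooth and singular strata forces the generic intersection to avoid all singular loci and be transverse. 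Your closing observation is also apt and matches the paper's narrative: Kleiman transversality is exactly the characteristic-zero input, and the paper's contribution in Section~\ref{sec:3} is to circumvent it by exhibiting explicit non-degenerate systems and checking separability of the pole placement map, rather than by re-proving transversality in positive characteristic.
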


Theorem~\ref{Schub} was derived by Hermann Schubert towards the
end of the 19th century~\cite{sc1886,sc1891}. The mathematicians
at the time were not convinced with the proofs Schubert was
providing. The verification of the statements constituted
Hilbert's 15th problem, which he presented at the International
Congress of Mathematics in 1900 in Paris.  Theorem~\ref{Schub}
has been later verified rigorously and we refer to Kleiman's
survey article~\cite{kl76}.

It is not completely obvious how the geometric result of Schubert
implies Theorem~\ref{BrBy} of Brockett and Byrnes. The following
questions have to be addressed: \label{rem-1}
\begin{enumerate}
\item Given an $m$-dimensional subspace $\mathrm{rowsp}\left[K_1\
    K_2\right]\subset \C^{m+p}$, where $K_1$ is an $m\times m$
  matrix and $K_2$ is an $m\times p$ matrix. Assume
  $\mathrm{rowsp}\left[K_1\ K_2\right]\subset \C^{m+p}$ is a
  geometric solution, i.e.,
  \begin{equation} \label{cond-3}
    \det\vier{K_1}{K_2}{N(s_i)}{D(s_i)}=0, \ i=1,\ldots,n.
  \end{equation}
  Does it follow that $\left[K_1\ K_2\right]$ is row equivalent
  to $\left[I\ K\right]$ and $K$ represents a feedback law? For
  this to happen it is necessary and sufficient that $K_1$
  is invertible.
\item Assume $\mathrm{rowsp}\left[K_1\ K_2\right]\subset
  \C^{m+p}$ is a geometric solution in the sense of\eqr{cond-3}.
  Does it follow that $\det\vier{I}{K}{N(s)}{D(s)}$ is NOT the
  zero polynomial?
\item How is it possible to deal with multiple roots?
\end{enumerate}

These questions were all addressed in~\cite{br81}. A key
ingredient is the notion of non-degenerate system.
\begin{definition}
  An $m$-input, $p$-output linear system $G(s)=D^{-1}(s)N(s)$ is
  called degenerate, if there exist an $m\times m$ matrix $K_1$ and an
  $m\times p$ matrix $K_2$ such that $\left[K_1\ K_2\right]$ has full
  rank $m$ and 
  \begin{equation} \label{cond-4}
    \det\vier{K_1}{K_2}{N(s)}{D(s)}=0.
  \end{equation}
  A system $G(s)$ which is not degenerate will be called
  non-degenerate.
\end{definition}
In more geometric terms, the Hermann Martin curve associated to a
non-degenerate system does not lie in any Schubert hyper-surface.

If $\left[N(s)\ D(s)\right]$ represents a non-degenerate system
of McMillan degree $n$, then $$\det\vier{K_1}{K_2}{N(s)}{D(s)}\neq
0$$ for any $\left[K_1\ K_2\right]$ of full rank.
If in addition $\left[K_1\ K_2\right]$ is a
geometric solution, then Condition\eqr{cond-3} is satisfied and
$\det\vier{K_1}{K_2}{N(s)}{D(s)}$ is a polynomial of degree at least
$n$. All the full size minors of $\left[N(s)\ D(s)\right]$ have degree
less than $n-1$, with the exception of the determinant of $D(s)$,
which has degree $n$. So the polynomial
$\det\vier{K_1}{K_2}{N(s)}{D(s)}$ cannot have degree $n$ unless $K_1$
is invertible. Hence it follows that a geometric solution for a
non-degenerate system results in a feedback solution $u=Ky$ on the
systems theory side. 

Non-degenerate systems are therefore very desirable. The
following theorem was formulated in~\cite{br81} in the case when
the base field is the complex numbers.

\begin{theorem} \label{nondegenerate} Let $\F$ be an arbitrary
  field. If $n<mp$ then every system $(A,B,C)$ defined over $\F$ with
  $m$-inputs, $p$-outputs and McMillan degree $n$ is degenerate. If
  $\F$ is an algebraically closed field and $n\geq mp$, then a generic
  system $(A,B,C)\in\F^{n^2+n(m+p)}$ is non-degenerate.
\end{theorem}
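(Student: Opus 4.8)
\textbf{Proof plan.}

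I would prove the first statement by a dimension count on the Grassmannian. Fix $(A,B,C)$ together with a left coprime factorization $D^{-1}(s)N(s)$ with $\det D(s)=\det(sI-A)$, and let $\pi_I(s)$, with $I$ ranging over the $p$-element subsets of $\{1,\dots,m+p\}$, be the maximal minors of $\left[N(s)\ D(s)\right]$; each has degree at most $n$, so they span a subspace $W\subseteq\F[s]_{\le n}$. Laplace expansion of\eqr{cond-4} along the first $m$ rows gives $\det\vier{K_1}{K_2}{N(s)}{D(s)}=\sum_I\pm c_I\,\pi_I(s)$, where the $c_I$ are the maximal minors, i.e.\ the Pl\"ucker coordinates, of $\left[K_1\ K_2\right]$. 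Hence the system is degenerate if and only if some point of $\mathrm{Grass}(m,\F^{m+p})$, Pl\"ucker-embedded in $\PP^{{m+p\choose m}-1}$, lies on the linear subspace $\PP(W^\perp)$ cut out by the condition $\sum_I\pm c_I\pi_I(s)\equiv0$; this subspace has dimension ${m+p\choose m}-1-\dim W\ge{m+p\choose m}-n-2$. Since $\mathrm{Grass}(m,\F^{m+p})$ is irreducible of dimension $mp$ and $mp+\bigl({m+p\choose m}-n-2\bigr)\ge{m+p\choose m}-1$ exactly when $n<mp$, the intersection is non-empty; any point in it is the row space of a full-rank $\left[K_1\ K_2\right]$ realizing\eqr{cond-4}, so the system is degenerate. (Over a non-closed field this produces the witnessing subspace over $\bar\F$, which is precisely the assertion that the Hermann--Martin curve lies in a Schubert hypersurface.)

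For the second statement, I would first record that non-degeneracy is Zariski-open on $\F^{n^2+n(m+p)}$. Indeed, the set of pairs $\bigl((A,B,C),\mathrm{rowsp}\left[K_1\ K_2\right]\bigr)$ with $\left[K_1\ K_2\right]$ of rank $m$ and\eqr{cond-4} holding is closed in $\F^{n^2+n(m+p)}\times\mathrm{Grass}(m,\F^{m+p})$, and its image under the projection to $\F^{n^2+n(m+p)}$ is closed because the Grassmannian is complete; this image is the degenerate locus. As the minimal (equivalently, McMillan degree exactly $n$) systems form a dense open set and the ambient space is irreducible, it therefore suffices to exhibit a \emph{single} non-degenerate minimal system with the prescribed $m,p,n$ under the hypothesis $n\ge mp$.

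Such a system I would construct by running the correspondence of the first paragraph in reverse: a minimal system is non-degenerate exactly when the span $W=\langle\pi_I(s):I\rangle\subseteq\F[s]_{\le n}$ of the maximal minors of its Hermann--Martin matrix is in general position relative to the Grassmannian, in the sense that $\mathrm{Grass}(m,\F^{m+p})\cap\PP(W^\perp)=\emptyset$. When ${m+p\choose m}\le n+1$ one simply takes $N(s),D(s)$ with linearly independent maximal minors, so that $W^\perp=0$ and there is nothing to verify. In general one can arrange $\dim W\ge mp+1$ (note $\det D(s)$ alone has degree $n\ge mp$), so $\dim\PP(W^\perp)\le{m+p\choose m}-mp-2$ and hence $\dim\PP(W^\perp)+\dim\mathrm{Grass}(m,\F^{m+p})\le{m+p\choose m}-2<{m+p\choose m}-1$: the \emph{expected} intersection is empty, so a sufficiently general $W$ will miss the Grassmannian.

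The main obstacle is to make ``sufficiently general $W$'' explicit: one must write down a concrete $\left[N(s)\ D(s)\right]$ of the right degree type (with $\det D(s)$ of degree $n$ and $\left[N(s)\ D(s)\right]$ coprime) for which $\mathrm{Grass}(m,\F^{m+p})\cap\PP(W^\perp)=\emptyset$. The delicate point is that $W$ is not a free linear subspace: the minors $\pi_I(s)$ satisfy the Pl\"ucker relations identically in $s$, so $W$ is constrained to be the linear span of a degree-$n$ curve lying on $\mathrm{Grass}(p,\F^{m+p})$. I would attempt a cyclic/companion model --- $A$ a companion matrix, with $B,C$ chosen so that the minors $\pi_I(s)$ reproduce the monomials $1,s,\dots,s^{n}$ together with just enough further entries to meet the Pl\"ucker relations --- and then check directly that every $V\in\mathrm{Grass}(m,\F^{m+p})$ with $(c_I(V))\in W^\perp$ is forced to have all Pl\"ucker coordinates zero. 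All the steps above are dimension counts or explicit polynomial computations, hence independent of the characteristic, which is the whole point of this proof; the case where the inequalities are tightest, and the construction has the least room, is the boundary $n=mp$.
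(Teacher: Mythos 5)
Your argument for the first half ($n<mp$) is a genuinely different route from the paper's: you intersect $\mathrm{Grass}(m,\F^{m+p})$ with the linear space $\PP(W^\perp)$ inside $\PP^{{m+p\choose m}-1}$ and invoke the projective dimension theorem, and the count $mp+\bigl({m+p\choose m}-n-2\bigr)\ge {m+p\choose m}-1\iff n<mp$ is correct. The paper instead observes that the $p$ row degrees of $[N(s)\ D(s)]$ sum to $n<mp$, so after row reduction some row has degree at most $m-1$; its at most $m$ coefficient vectors span a subspace that extends to an $m$-dimensional $\mathrm{rowsp}[K_1\ K_2]$ containing that row for every $s$, killing the determinant. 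The difference matters: the theorem asserts degeneracy for an \emph{arbitrary} field $\F$, and your intersection argument only produces the witnessing compensator over $\bar{\F}$ (when $n=mp-1$ the intersection has expected dimension $0$ and need not contain an $\F$-rational point), whereas the paper's linear-algebra argument yields $[K_1\ K_2]$ over $\F$ itself. As you state it, your first half proves a weaker statement.

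The serious gap is in the second half. Your reduction — the degenerate locus is the image of a closed incidence set under projection along the complete Grassmannian, hence closed, so it suffices to exhibit one non-degenerate system — is exactly the paper's. But you do not exhibit one; you explicitly defer it (``I would attempt a cyclic/companion model \dots and then check directly''). The expected-dimension heuristic you offer in its place proves nothing here: $W$ is not free to range over all subspaces of $\F[s]_{\le n}$ of the stated dimension, but is the linear span of the Pl\"ucker image of a degree-$n$ rational curve on $\mathrm{Grass}(p,\F^{m+p})$, a highly constrained family, and genericity \emph{within that family} is precisely what must be proven by producing a member. That this is delicate in characteristic $q>0$ is the whole point of the paper: the natural candidate, the osculating normal curve (even redefined via Hasse derivatives), is shown to be degenerate whenever $q\le m$, and a monomial system can be degenerate even when its coefficient matrix is MDS (Example~\ref{MDSdeg}). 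The paper closes the gap with a concrete construction — a monomial system with MDS coefficient matrix $[I_p\ R]$ and degree matrix $d_{i,j}=j-i$ for $j\ge i$ — and proves non-degeneracy by a combinatorial leading-term argument: if $\hat\beta$ is the pivot set of the echelon form of $[K_1\ K_2]$ and $\beta$ its complement, then $k_\beta\ne 0$ and the term $m_\beta s^{|\beta|}$ is the unique term of its degree among the indices $\alpha\le\beta$, so it cannot cancel in the expansion $\sum_\alpha k_\alpha g_\alpha(s)$. Without some such explicit construction and verification, your proposal establishes only the (comparatively easy) facts that the degenerate locus is closed and that degeneracy is forced when $n<mp$; the existence statement for $n\ge mp$ in positive characteristic — the main theorem — remains unproved.
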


The proof of the first part of the statement follows from basic
properties of 
coprime factorizations of transfer functions. Indeed let the
$p\times (m+p)$ polynomial matrix $M(s)=\left[N(s)\ D(s)\right]$
represent an $m$-inputs, $p$-outputs system of McMillan degree
$n<mp$. Then, possibly after some row reductions, we find a
row of $M(s)$ whose degree is at most $m-1$. Using this row one
readily constructs a full rank $m\times (m+p)$ matrix such
that\eqr{cond-4} holds. This shows that $G(s)=D^{-1}(s)N(s)$ is
degenerate.

The second part of the statement, namely that a generic system defined
over $\F$ is non-degenerate, will be established through a series of
lemmas. Here $\F$ is an algebraically closed field of characteristic
zero.

Notice that it is enough to show that the set of degenerate systems
is contained in a proper algebraic set of $\F^{n^2+n(m+p)}$. In order
to prove this, we establish an algebraic relation between
the polynomial matrix $\left[N(s)\ D(s)\right]$ and the matrices
$(A,B,C)$. The following lemma is an ingredient of classical
realization theory. The proof and the concept of basis matrix is
found in~\cite{ro97a1}.

\begin{lemma} \label{lemma-1} Assume $G(s)=D^{-1}(s)N(s)$ is a
  left coprime factorization of a $p\times m$ transfer function
  of McMillan degree $n$. Then for every $p\times n$ basis matrix
  $X(s)$ there are matrices $A\in\F^{n\times n}, B\in\F^{n\times
    m}$ and $C\in\F^{p\times n}$ such that:
  \begin{equation} \label{cond-5} \kerRs [X(s) \sep N(s)\sep
    D(s)] \;=\; \imRs \left[ \begin{array}{cc} (sI_n-A) & B \\ 0
        & I_m\\ C & 0
      \end{array} \right].
  \end{equation}
  Furthermore $(A,B,C)$ is a minimal realization of $G(s)$,
  i.e.,
$$
G(s)=C(sI-A)^{-1}B,
$$ 
and for every minimal realization $(A,B,C)$ of $G(s)$ there
exists a basis matrix $X(s)$ such that\eqr{cond-5} is satisfied.
\end{lemma}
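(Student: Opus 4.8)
The plan is to read\eqr{cond-5} as an equality of two $\F(s)$-subspaces of $\F(s)^{\,n+m+p}$ and reduce it to a pair of polynomial identities together with a dimension count. Both sides have dimension $n+m$: the right-hand side is the column span of a matrix of full column rank, equal to $n+m$, because the $(n+m)\times(n+m)$ submatrix obtained by deleting the block row $[\,C\ \ 0\,]$ has determinant $\det(sI_n-A)\neq 0$; the left-hand side is the kernel of $[X(s)\sep N(s)\sep D(s)]$, which has full row rank $p$ since $D(s)$ is already invertible over $\F(s)$, so that kernel has dimension $(n+m+p)-p=n+m$. Hence\eqr{cond-5} is equivalent to the inclusion of the right-hand module into the kernel, i.e., to
\[
[X(s)\sep N(s)\sep D(s)]\left[\begin{array}{cc} sI_n-A & B \\ 0 & I_m \\ C & 0 \end{array}\right]=0 ,
\]
and, reading off the two block columns, to the pair of polynomial identities
\[
X(s)(sI_n-A)=-D(s)\,C ,\qquad N(s)=-X(s)\,B .
\]
So the lemma comes down to matching basis matrices $X(s)$ with triples $(A,B,C)$ for which these two identities hold.

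First I would construct one matching pair. By classical realization theory $G(s)$ admits a minimal realization $(A_0,B_0,C_0)$ with $A_0\in\F^{n\times n}$, and we may normalize the left coprime factorization so that $\det D(s)=\det(sI_n-A_0)=:\chi(s)$. Put $X_0(s):=-D(s)\,C_0(sI_n-A_0)^{-1}$. The second identity then holds because $-X_0(s)B_0=D(s)C_0(sI_n-A_0)^{-1}B_0=D(s)D(s)^{-1}N(s)=N(s)$, and the first is immediate; the one substantive point is that $X_0(s)$ is polynomial. Using $\mathrm{adj}(sI_n-A_0)(sI_n-A_0)=\chi(s)I_n$, the polynomial matrix $P(s):=D(s)C_0\,\mathrm{adj}(sI_n-A_0)$ satisfies $P(s)(sI_n-A_0)=\chi(s)D(s)C_0$ and $P(s)B_0=\chi(s)N(s)$, so $\chi(s)$ divides $P(s)[sI_n-A_0\sep B_0]$; since $(A_0,B_0)$ is controllable, $[sI_n-A_0\sep B_0]$ has a polynomial right inverse, whence $\chi(s)\mid P(s)$ and $X_0(s)=-P(s)/\chi(s)$ is polynomial. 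Observability of $(A_0,C_0)$ forces the $n$ columns of $D(s)^{-1}X_0(s)=-C_0(sI_n-A_0)^{-1}$ to be $\F$-linearly independent, and, since they lie in the $n$-dimensional state space attached to $D(s)$ (the shift realization recalled in \cite{ro97a1}), they form a basis of it; thus $X_0(s)$ is a basis matrix, and by the dimension count of the first paragraph,\eqr{cond-5} holds for the pair $X_0(s)$ and $(A_0,B_0,C_0)$.

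Next I would pass to an arbitrary basis matrix, obtaining the converse at the same time. Recall from \cite{ro97a1} that any two basis matrices of $D^{-1}N$ differ by right multiplication by a unique $T\in\mathrm{GL}_n(\F)$, and that any two minimal realizations of $G(s)$ are similar. A one-line computation, using $(sI_n-TA_0T^{-1})^{-1}=T(sI_n-A_0)^{-1}T^{-1}$, gives $X_0(s)T^{-1}=-D(s)(C_0T^{-1})(sI_n-TA_0T^{-1})^{-1}$, so the pair $X_0(s)T^{-1}$ and $(TA_0T^{-1},TB_0,C_0T^{-1})$ is again of the form above and hence satisfies the two identities. This yields the first assertion of the lemma (for every basis matrix $X(s)=X_0(s)T^{-1}$ there is a triple $(A,B,C)$ with\eqr{cond-5}) and the last one (every minimal realization appears, paired with $X_0(s)T^{-1}$). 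It remains to check that any $(A,B,C)$ satisfying\eqr{cond-5} is a minimal realization of $G(s)$: the two identities give $C(sI_n-A)^{-1}B=-D(s)^{-1}X(s)B=D(s)^{-1}N(s)=G(s)$, so $(A,B,C)$ is a realization of $G(s)$ of state dimension $n=\deg\det D(s)$, the McMillan degree of $G(s)$, and a realization whose state dimension equals the McMillan degree is minimal.

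The main obstacle is the interface with the notion of basis matrix: one must pin down the ($n$-dimensional) state space of the shift realization attached to $D(s)$ from \cite{ro97a1}, verify that $-C_0(sI_n-A_0)^{-1}$ lies in it, and establish the dictionary that presents the basis matrices of $D^{-1}N$ and the minimal realizations of $G(s)$ as two $\mathrm{GL}_n(\F)$-orbits in equivariant bijection. The polynomiality of $X_0(s)$, once this framework is in place, is the short $\mathrm{adj}$-and-controllability computation above; the rest is block-matrix bookkeeping and the two dimension counts. None of it uses the characteristic of $\F$.
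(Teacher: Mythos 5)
The paper does not actually prove Lemma~\ref{lemma-1}: it defers both the proof and the very definition of ``basis matrix'' to \cite{ro97a1}, so there is no in-text argument to measure yours against. Judged on its own terms, your reconstruction is sound. The reduction of\eqr{cond-5} to the two polynomial identities $X(s)(sI_n-A)=-D(s)C$ and $N(s)=-X(s)B$ is correct: both sides of\eqr{cond-5} have $\F(s)$-dimension $n+m$ (the submatrix obtained by deleting the block row $[\,C \ \ 0\,]$ has determinant $\det(sI_n-A)\neq 0$, and invertibility of $D(s)$ over $\F(s)$ gives the kernel dimension), so one inclusion suffices. The polynomiality of $X_0(s)=-D(s)C_0(sI_n-A_0)^{-1}$ via the adjugate identity and the existence of a polynomial right inverse of $[\,sI_n-A_0\sep B_0\,]$ (controllability, equivalently left primeness by the PBH test over the algebraic closure) is a clean argument that works in any characteristic, and the $\mathrm{GL}_n(\F)$-equivariance step correctly sweeps out all basis matrices and all minimal realizations at once, with the minimality of any $(A,B,C)$ satisfying\eqr{cond-5} following from $n=\deg\det D(s)$ being the McMillan degree. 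The one point you cannot make self-contained --- and you explicitly flag it --- is the interface with the notion of basis matrix: you need that a basis matrix is by definition one whose columns form an $\F$-basis of the $n$-dimensional polynomial model $\{f\in\F[s]^p \,:\, D(s)^{-1}f(s)\ \mbox{strictly proper}\}$, so that strict properness of $-C_0(sI_n-A_0)^{-1}$ together with observability identifies $X_0(s)$ as a basis matrix, and so that any two basis matrices differ by a constant invertible right factor. Since that is exactly the content the paper itself outsources to \cite{ro97a1}, this reliance is not a gap relative to the paper's own standard; your write-up is, if anything, more explicit than the source it replaces.
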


As pointed out in~\cite{ro97a1}, for certain basis
matrices $X(s)$ it is possible to compute $(A,B,C)$ just ``by
inspection''.

Using the previous lemma, one readily establishes the following:

\begin{lemma} \label{lemma-2} Assume that $(A,B,C)$ is a minimal
  realization of $G(s)=D^{-1}(s)N(s)$ and $\det(sI-A)=\det D(s)$.
  Then
  \begin{equation}
    \det\vier{K_1}{K_2}{N(s)}{D(s)}=\det\vier{sI-A}{B}{K_2 C}{K_1}.
  \end{equation}
\end{lemma}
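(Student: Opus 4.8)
The plan is to start from the identity in Lemma~\ref{lemma-1}, which realizes the kernel over $\F(s)$ of $[X(s)\sep N(s)\sep D(s)]$ as the image of an explicit $(2n+m)\times(n+m)$ polynomial matrix. From that identity one reads off that the columns of $\left[\begin{smallmatrix} sI_n-A \\ 0 \\ C\end{smallmatrix}\right]$ and $\left[\begin{smallmatrix} B \\ I_m \\ 0 \end{smallmatrix}\right]$ span the solution space of $X(s)\alpha+N(s)\beta+D(s)\gamma=0$, which translates into the matrix relation $X(s)(sI-A)+D(s)C=0$ and $X(s)B+N(s)=0$, i.e. $N(s)=-X(s)B$ and $D(s)C=-X(s)(sI-A)$, equivalently $D(s)=-X(s)(sI-A)C^{-1}$ when $C$ is taken square/invertible by an appropriate choice; more robustly, $X(s)$ satisfies $X(s)(sI-A)=-D(s)C$ and $X(s)B=-N(s)$. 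Since $X(s)$ is a $p\times n$ basis matrix it has a generalized inverse over $\F(s)$, and because $\det(sI-A)=\det D(s)$ the matrix $X(s)$ is in fact $\F(s)$-invertible as a map onto the appropriate space; the first step is to pin down this invertibility carefully.

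The second step is the determinant computation itself. Using $N(s)=-X(s)B$ and the relation tying $D(s)$ to $X(s)$ and $(sI-A)$, one factors
\begin{equation*}
\vier{K_1}{K_2}{N(s)}{D(s)}
=\vier{I_m}{0}{0}{-X(s)}\cdot\vier{K_1}{K_2}{B}{?}\cdot(\text{something built from }sI-A,C),
\end{equation*}
and then applies multiplicativity of determinants together with the Schur-complement formula $\det\left[\begin{smallmatrix}P & Q\\ R & S\end{smallmatrix}\right]=\det(P)\det(S-RP^{-1}Q)$ (or its transpose). The factors $\det X(s)$ and $\det(\text{realization data})$ cancel against $\det D(s)=\det(sI-A)$, leaving exactly $\det\left[\begin{smallmatrix}sI-A & B\\ K_2C & K_1\end{smallmatrix}\right]$. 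I expect the cleanest route is to write both sides as $\det(sI-A)$ times a ratio of determinants over $\F(s)$ and check the ratios agree, since everything in sight is a polynomial identity it then holds identically.

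The main obstacle is bookkeeping with the basis matrix $X(s)$: one must verify it is genuinely invertible over $\F(s)$ (not merely a one-sided inverse) and that the block factorization above is valid, i.e. that the relations extracted from\eqr{cond-5} are exactly $N(s)=-X(s)B$ and $X(s)(sI-A)=-D(s)C$ with $C$ of full row rank. Once the factorization is set up, the determinant manipulation is routine. An alternative that sidesteps the choice of $X(s)$ is to verify the claimed identity directly for generic $(A,B,C)$ with $C$ invertible (so $X(s)=-D(s)C(sI-A)^{-1}$ literally), where both sides are explicit rational functions, and then invoke Zariski density to conclude the polynomial identity for all minimal realizations; I would mention this as the fallback if the basis-matrix argument gets unwieldy.
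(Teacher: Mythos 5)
Your central route has a step that fails on dimensional grounds: the basis matrix $X(s)$ is $p\times n$, so for $p\neq n$ it is not square and cannot be ``$\F(s)$-invertible''; likewise $C$ is $p\times n$, so it cannot be ``taken square/invertible'', and the expressions $C^{-1}$, $X(s)^{-1}$ and the cancellation of $\det X(s)$ against $\det D(s)$ in your proposed factorization do not typecheck. The relations you extract from\eqr{cond-5}, namely $N(s)=-X(s)B$ and $X(s)(sI-A)=-D(s)C$, are correct, but the factorization built on inverting $X(s)$ or $C$ only makes sense when $p=n$. The fallback has the same problem: for fixed $m,p,n$ with $p\neq n$ there are no systems with $C$ invertible at all, so there is nothing to extend by Zariski density.

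Fortunately the aside you label as the ``cleanest route'' is already a complete proof and needs none of the $X(s)$ bookkeeping. Over $\F(s)$, Schur complementation with respect to the invertible blocks $D(s)$ and $sI-A$ gives
\begin{align*}
\det\vier{K_1}{K_2}{N(s)}{D(s)}&=\det D(s)\,\det\bigl(K_1-K_2D(s)^{-1}N(s)\bigr),\\
\det\vier{sI-A}{B}{K_2C}{K_1}&=\det(sI-A)\,\det\bigl(K_1-K_2C(sI-A)^{-1}B\bigr),
\end{align*}
and the two right-hand sides coincide because $D(s)^{-1}N(s)=G(s)=C(sI-A)^{-1}B$ (this is where the realization property enters, with Lemma~\ref{lemma-1} as one way to certify it) and $\det D(s)=\det(sI-A)$ by hypothesis. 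Both sides of the lemma are polynomials in $s$, so equality in $\F(s)$ yields the polynomial identity. The paper gives no details beyond ``using the previous lemma, one readily establishes''; the two-line Schur-complement computation above is the intended argument, and you should replace the $X(s)$ factorization with it.
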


As before, identify an $m$-inputs, $p$-outputs system $(A,B,C)$ of
McMillan degree $n$ with a point of $\F^{n^2+n(m+p)}$. Let $S$ be the set:
\begin{equation}                   \label{coincidence}
\left\{ ((K_1,K_2),(A,B,C))\in
  \mathrm{Grass}(m,\F^{m+p})\times \F^{n^2+n(m+p)} :
  \det\vier{sI-A}{B}{K_2 C}{K_1}=0\right\}.
\end{equation}
Since Grass$(m,\F^{m+p})$ is a projective
variety, the projection of $S$ onto $\F^{n^2+n(m+p)}$ is an algebraic
set. This follows from the main theorem of elimination theory (see,
e.g.,~\cite{mu76}). We have therefore established that the set of
degenerate systems inside $\F^{n^2+n(m+p)}$ is an algebraic
set. We establish the genericity result as soon as we can
show the existence of one non-degenerate system, under the assumption
that $n\geq mp$.

\begin{remark}
  In the case of proper transfer functions, the dimension of the
  coincidence set $S$ was computed in~\cite[Theorem~5.5]{ro94}.
  With this result it was then shown in~\cite{ro94} that the set
  of non-degenerate systems inside the quasi-projective variety
  of proper transfer functions contains a dense Zariski-open  
  set as soon as $n\geq mp$. 
\end{remark}

\begin{definition}
  Let $\F$ be an algebraically closed field of characteristic
  $0$. The {\em osculating normal curve} $C_{p,m}$ is the closure
  of the image of the morphism
  \begin{equation}\label{ONC}
    \begin{array}{ccl}
      \F & \longrightarrow & \mathrm{Grass}(p,\F^{m+p}) \\
      s & \longmapsto &
      \mathrm{rowsp}\left[\frac{d}{d^i}s^j\right]_{i=0,\ldots,p-1;\,
        j=0,\ldots,m+p-1.}
    \end{array}
  \end{equation}
\end{definition}
We denote by $d/d^i$ the $i$-th derivative with respect to $s$,
i.e.,
$$
\frac{d}{d^i}s^j= \left\{\begin{array}{ll}
    \prod_{k=0}^{i-1}(j-k) s^{j-i} & \mbox{if $j\geq i$} \\
    0 & \mbox{if $j<i$.}
  \end{array}\right.
$$

The osculating normal curve is an example of a non-degenerate
curve in the Grassmannian $\mathrm{Grass}(p,\F^{m+p})$.  An
elementary matrix proof of this fact was first given
in~\cite{ro90}. We will say more about it in the next section.
If $n>mp$ one constructs a non-degenerate system by simply
multiplying the last column of the matrix representing the
osculating normal curve by $s^{n-mp}$.

In the case $p=1$, this is the rational normal curve of degree
$m$ in $\PP^m\cong \mathrm{Grass}(1,\F^{m+1})$. In the case
$m=1$, the osculating normal curve is isomorphic to the rational
normal curve of degree $p$ in $\PP^p\cong
\mathrm{Grass}(p,\F^{p+1})$.

So far we have shown that if $mp\geq n$, then a generic system
is non-degenerate. Moreover, if $n=mp$, the system is non-degenerate and the
desired closed loop polynomial has distinct roots, then pole
placement is possible with $d(m,p)$ different feedback
compensators. 

It remains to be addressed the question of multiple roots in the
closed loop polynomial. This has been done in the literature by
lifting the pole placement map\eqr{pole-map} from $\F^{m\times
  p}$ to the Grassmann variety Grass$(m,\F^{m+p})$. We follow
the arguments in~\cite{ro98}.

We can expand the closed loop characteristic polynomial as:
\begin{equation}
  \det\vier{K_1}{K_2}{N(s)}{D(s)}=\sum_\alpha k_\alpha g_\alpha(s),
\end{equation}
where $k_\alpha$ are the Pl\"ucker coordinates of
$\mathrm{rowsp}\left[ K_1\
  K_2\right]\in\mathrm{Grass}(m,\F^{m+p})$ and where the
polynomials $g_\alpha(s)$ are (up to sign) the corresponding
Pl\"ucker coordinates of $[N(s)\ D(s)]$.  Let $\PP^N$ be the
projective space $\PP(\wedge^m\F^{m+p})$ and let
$$
E_{(A,B,C)}:=\left\{ k\in \PP^N \mid \sum_\alpha
  g_\alpha(s)k_\alpha =0 \right\} .
$$
As shown in~\cite{wa96}, one has an extended pole placement map
with the structure of a central projection:
\begin{equation}
  \label{central}
  L_{(A,B,C)} \; :\;
  \PP^N-E_{(A,B,C)} \  \longrightarrow \
  \PP^n,\hspace{9mm}
  k\  \longmapsto \ \sum_\alpha  k_\alpha g_\alpha(s).
\end{equation}
A system $[N(s)\ D(s)]$ is non-degenerate if and only if:
$$
E_{(A,B,C)}\cap \mathrm{Grass}(m,\F^{m+p})=\left\{ \right\}.
$$
For a non-degenerate system, the extended pole placement map 
$L_{(A,B,C)}$ induces a finite morphism:
\begin{equation} \label{pole-map-ext} \hat{\chi}_{(A,B,C)}:\,
  \mathrm{Grass}(m,\F^{m+p})\longrightarrow\PP^n,\hspace{3mm}
  \mathrm{rowsp}\left[K_1\ K_2\right] \longmapsto
  \det\vier{K_1}{K_2}{N(s)}{D(s)}.
\end{equation}

The inverse image of a closed loop polynomial $\phi(s)\in \PP^n$
under the map $L_{(A,B,C)}$ is a linear space which intersects the
Grassmann variety $\mathrm{Grass}(m,\F^{m+p})$ in as many points
(counted with multiplicity) as the degree of the Grassmann variety. 
This is equal to Schubert's number $d(m,p)$.

This completes the proof of Theorem~\ref{BrBy} of Brockett and
Byrnes in the case $n=mp$ not only for the field of complex numbers,
but also in the 
case when the base field is algebraically closed and has
characteristic zero. In Remark~\ref{n<mp} we will discuss how to extend
the proof to the case when $n<mp$.

In the next section we will discuss how to extend
Theorem~\ref{nondegenerate} to the case of an algebraically closed
field of positive characteristic.  
We will show that it is much more tricky to establish the existence of
non-degenerate systems in the case when the base field has positive
characteristic.

\section{A proof of Theorem~\ref{BrBy} in positive
  characteristic} \label{sec:3}

Let $\F$ be an algebraically closed field of characteristic
$q>0$. Lemma~\ref{lemma-1} and Lemma~\ref{lemma-2} as formulated
in the last section only depend on techniques from linear algebra
and are true over an arbitrary field, so in particular over an
algebraically closed field.

If $X$ is a projective variety, $Y$ is a quasi-projective
variety, and $S\subset X\times Y$ is an algebraic subset, then the
projection of $S$ onto $Y$ is a Zariski-closed subset of $Y$
(see, e.g.,~\cite[Chapter I, Section 5.2]{sh94b1}). This shows that
Theorem~\ref{nondegenerate} also holds over an algebraically
closed field. In order to establish Theorem~\ref{BrBy}, we have to
show that there exists at least one non-degenerate system for any
choice of the parameters $p,m,n\geq mp$.  
We will also show that a generic fiber contains $d(m,p)$ elements
when $n=mp$.

The last statement is true as soon as the extended pole placement
map $\hat{\chi}_{(A,B,C)}$ is separable~\cite[Chapter II, Section
6.3]{sh94b1}. This is indeed the case: $\hat{\chi}_{(A,B,C)}$ can
be seen as the composition of the Pl\"ucker embedding (which just
involves the computation of minors) and the linear map
$L_{(A,B,C)}$. Both maps are separable, and we conclude therefore
that the composition map is separable.

So there remains the problem of establishing the existence of
non-degenerate systems in the case $n\geq mp$. As we will show next, the
osculating normal curve may be degenerate in
characteristic $q>0$. In Section~\ref{MDS} we will provide alternative
examples of non-degenerate systems in positive characteristic, 
while in Section~\ref{ff} we will discuss the case of finite fields.

\subsection{The osculating normal curve}

Although the osculating normal curve is defined over a field of 
characteristic zero, its reduction modulo $q$ defines a curve in
$\mathrm{Grass}(p,\F^{m+p})$, which can again be regarded as the
closure of the image of morphism (\ref{ONC}).  If $p=1$, the
curve is the rational normal curve of degree $m$ in
$\mathrm{Grass}(1,\F^{m+1})\cong\PP^m$. In particular it is
non-degenerate.  Notice however that the reduction of the
osculating normal curve is degenerate whenever $q\leq p+m$, provided
that $p\geq 2$.  
This is easily checked if $q<p$, since in this case the $(q+1)$-st
row of the matrix defining the curve is identically zero. If $p\leq
q\leq p+m$, consider the minor of the sub-matrix consisting of
columns $1,\ldots,p-1,q$. This sub-matrix has the form:
$$
\left[
  \begin{array}{ccccc}
    1  &\ s & \ldots\ &  s^{p-2}    & s^q\\
    0  & 1\ &         & (p-2)s^{p-3}& 0\\
    \vdots &    &\ \ddots &   \vdots    & \vdots \\
    \vdots &    &         &  (p-2)!     & \vdots \\
    0  &\ldots&       &     0       &  0
  \end{array}
\right]
$$
It follows that the corresponding minor is zero. By choosing a
compensator $[K_1\ K_2]$ whose sub-matrix consisting of the
``complementary columns'' $p,p+1,\ldots,q-1,q+1,\ldots,p+m$ is the
identity matrix and where all other elements are zero, one
verifies that the osculating normal curve is also degenerate in
this situation.

\begin{remark} \label{zerominor} If at least one minor of the
  matrix $\left[N(s)\ D(s)\right]$ is 0, then the system
  $G(s)=D^{-1}(s)N(s)$ is degenerate.
\end{remark}

Notice that if $q\gg 0$, then the reduction modulo $q$ of the
osculating normal curve is  
non-degenerate. This reflects the usual fact that ``fields with 
large enough characteristic behave like fields of characteristic
zero''.

The appearance of many zero entries in the matrix over a field
$\F$ of ``small'' positive characteristic $q$ is due to the fact that
many derivatives vanish. More precisely, let $h\in\{0,\ldots,q-1\}$
s.t. $j=h$ mod. $q$. Then
$$
\frac{d}{d^i}s^j= \left\{\begin{array}{ll}
    \prod_{k=0}^{i-1}(j-k) s^{j-i} & \mbox{if $h\geq i$} \\
    0 & \mbox{if $h<i$}
  \end{array}\right.
$$
This was one of the reasons that motivated Hasse to introduce the
following concept.

\begin{definition}
  The {\em $i$-th Hasse derivative} of a polynomial
  $u(s)=\sum_{j=0}^d u_js^j$ is defined as:
$$
\frac{\partial}{\partial^i}u(s)=\sum_{j=i}^d {j\choose i}u_i
s^{j-i}.
$$
\end{definition}

Observe that in characteristic 0 one has
$$
\frac{\partial}{\partial^i}=\frac{1}{i!}\frac{d}{d^i}.
$$
Moreover, none of the Hasse derivatives vanishes identically for
all polynomials, regardless of the characteristic of the base
field, whereas in characteristic $q>0$, the $i$-th derivative of
any polynomial is identically zero for all $i\geq q$.

It is therefore natural that we define the osculating normal
curve in positive characteristic using the Hasse derivative
instead of the normal derivative.

\begin{definition}
  Let $\F$ be an algebraically closed field of characteristic
  $q>0$. The {\em osculating normal curve} $C_{p,m}$ is the
  closure of the image of the morphism
  \begin{equation}\label{ONCP}
    \begin{array}{ccl}
      \F & \longrightarrow & \mathrm{Grass}(p,\F^{m+p}) \\
      s & \longmapsto &
      \mathrm{rowsp}\left[\frac{\partial}{\partial^i}s^j\right]_{i=0,
        \ldots,p-1;\, j=0,\ldots,m+p-1.} 
    \end{array}
  \end{equation}
  where $\partial$ denotes the Hasse derivative.
\end{definition}

For $p\leq 2$ the definition agrees with the one given at the
beginning of this section. In particular, for $p=1$ we have a
non-degenerate rational normal curve of degree $m$ in
$\mathrm{Grass}(1,\F^{m+1})\cong\PP^m$. Notice also that the curve is
well defined even if $p>q$, as we do not generate a zero row
in the defining matrix.

Unfortunately, even with this adapted definition the osculating
normal curve is degenerate for many choices of the parameters, 
as the following result points out:

\begin{proposition}\label{degen}
  Let $\F$ be an algebraically closed field of characteristic
  $q>0$. Assume that $q\leq m$. Then the osculating normal 
  curve $C_{p,m}$ is degenerate.
\end{proposition}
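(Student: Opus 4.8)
The plan is to exhibit, for each characteristic $q\leq m$, an explicit full-rank $m\times(m+p)$ matrix $[K_1\ K_2]$ that kills the determinant in\eqr{cond-4} identically in $s$. The strategy is the same as the one used above in the classical (derivative) case for $p\leq q\leq p+m$: find a $p\times p$ sub-matrix of $\left[\frac{\partial}{\partial^i}s^j\right]$ whose determinant vanishes identically as a polynomial in $s$, then take $[K_1\ K_2]$ to be the matrix whose columns are the standard basis vectors indexed by the complementary column set. By construction $[K_1\ K_2]$ has rank $m$, and the expansion of $\det\vier{K_1}{K_2}{N(s)}{D(s)}$ along the Pl\"ucker coordinates reduces (up to sign) to the single $p\times p$ minor of $[N(s)\ D(s)]$ sitting in the chosen columns. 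By Remark~\ref{zerominor}, once that minor is the zero polynomial, the system is degenerate. So the whole problem reduces to: \emph{find $p$ columns of $\left[\frac{\partial}{\partial^i}s^j\right]_{i=0,\ldots,p-1;\,j=0,\ldots,m+p-1}$ on which the maximal minor vanishes identically when $q\leq m$.}

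First I would recall the shape of the $(i,j)$ entry: it is $\binom{j}{i}s^{j-i}$, which is a monomial in $s$ (or zero) but now with the coefficient $\binom{j}{i}$ read modulo $q$. The key arithmetic input is Lucas' theorem: $\binom{j}{i}\equiv 0 \pmod q$ precisely when some base-$q$ digit of $i$ exceeds the corresponding digit of $j$. Since $q\leq m$, the column indices $j$ range over $\{0,\ldots,m+p-1\}$, a set of length $\geq q+p$, so it contains a full residue class structure mod $q$; in particular I can find a column index $j_0$ with $j_0\equiv r\pmod q$ for a residue $r<p-1$ of my choosing, placed far enough to the right. The idea is that for such a column the entries $\binom{j_0}{i}s^{j_0-i}$ with $i$ in the range $r+1,\ldots,p-1$ all vanish (the last nonzero digit argument, exactly as in the displayed computation above where the bottom entries of the special column were $0$), so the chosen column has ``too many zeros'' relative to its position.

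Concretely, I would take the sub-matrix on columns $\{0,1,\ldots,p-2\}\cup\{j_0\}$ with $j_0\equiv p-2\pmod q$ — wait, more carefully: I want a column whose nonzero entries occur only in rows $i\leq$ some bound $b<p-1$, while the triangular block on columns $0,\ldots,p-2$ already occupies rows $0,\ldots,p-2$ in echelon form. One checks, exactly as in the displayed matrix for the ordinary derivative, that the determinant of this $p\times p$ sub-matrix is forced to be zero because the last row is entirely zero (the leading-digit obstruction from Lucas). The condition $q\leq m$ is what guarantees $j_0\leq m+p-1$, i.e.\ that such a column index is actually available. I would split into the two regimes $q<p$ (where already the $q$-th row indexed $i=q$ has $\binom{j}{q}\equiv\binom{\lfloor j/q\rfloor}{1}\binom{j\bmod q}{0}$... — here instead the simplest statement is that row $i=q$ is not identically zero, so that naive argument from the characteristic-zero case fails and one genuinely needs the Hasse version) and $q\geq p$. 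In the second regime the column-selection argument above applies cleanly; in the first regime $q<p\leq m$ one has even more room and a similar but slightly different choice of complementary columns works.

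The main obstacle I anticipate is the bookkeeping in making the Lucas-theorem vanishing pattern line up with a genuine identical-zero minor rather than merely a minor that vanishes at special values of $s$: one must be careful that after the monomials $s^{j-i}$ are taken into account, the determinant is a polynomial each of whose coefficients is separately forced to be zero, not a polynomial that happens to have many roots. The cleanest way around this is to choose the complementary column set so that the $p\times p$ sub-matrix is block upper-triangular (an invertible $(p-1)\times(p-1)$ triangular block on columns $0,\ldots,p-2$, times a single remaining entry), so the minor is, up to a unit, exactly the $(p,\text{last})$ entry of the special column, and that entry is $\binom{j_0}{p-1}s^{j_0-p+1}$, which is the zero polynomial precisely when $\binom{j_0}{p-1}\equiv 0\pmod q$ — and choosing $j_0$ with a base-$q$ digit smaller than the corresponding digit of $p-1$ is possible inside the range $\{0,\ldots,m+p-1\}$ exactly because $q\leq m$. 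Once that is pinned down, the passage from ``a zero minor'' to ``$G(s)$ degenerate'' is immediate by Remark~\ref{zerominor}, completing the proof.
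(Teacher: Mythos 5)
Your overall strategy is exactly the paper's: reduce via Remark~\ref{zerominor} to exhibiting one identically vanishing maximal minor of $\left[{j\choose i}s^{j-i}\right]$, take the columns $0,\ldots,p-2$ together with one special column $j_0$, observe that the resulting $p\times p$ sub-matrix is triangular with units on the diagonal except for the corner entry, so that the minor equals ${j_0\choose p-1}s^{j_0-p+1}$ up to sign, and then kill that binomial coefficient mod $q$. Up to that point everything you say is correct, and your block-triangularity remark properly disposes of the ``many roots versus identically zero'' worry. The gap is in the one sentence carrying all the weight: the claim that a column $j_0$ with ${j_0\choose p-1}\equiv 0\pmod q$ (equivalently, by Lucas, with some base-$q$ digit of $j_0$ strictly below the corresponding digit of $p-1$) always exists in $\{p-1,\ldots,m+p-1\}$ ``exactly because $q\leq m$.'' This is not proved, and it is false in general. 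A window of $m\geq q$ consecutive column indices lets you prescribe only the \emph{last} base-$q$ digit of $j_0$, so the argument succeeds precisely when the last digit of $p-1$ is nonzero, i.e.\ when $q\nmid p-1$: then take $j_0$ the multiple of $q$ in that window, which is the paper's choice of the column $c$. If $q\mid p-1$ you cannot go below a zero last digit and would need to control a higher digit of $j_0$, which the window need not permit.

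Concretely, take $q=3$, $p=4$, $m=3$. The admissible special columns are $j_0\in\{3,4,5,6\}$, and ${3\choose 3},{4\choose 3},{5\choose 3},{6\choose 3}=1,4,10,20\equiv 1,1,1,2\pmod 3$, so no choice of $j_0$ works and your construction produces nothing (the paper's stated choice of $c$ a multiple of $q$ runs into the same obstruction here). The proposition is nevertheless true in this case, but one must abandon the template ``columns $0,\ldots,p-2$ plus one more'': for instance, modulo $3$ the row $i=2$ of the Hasse matrix is $(0,0,1,0,0,s^3,0)$, so the minor on columns $\{0,1,3,4\}$ has an identically zero row and vanishes. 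The missing ingredient is therefore a column selection covering the residual case $q\mid p-1$ — e.g.\ locating a row $i\leq p-1$ whose coefficients ${j\choose i}$ vanish on at least $p$ of the available columns, or an induction on base-$q$ digits; without that, the proof is incomplete.
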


\begin{proof} \smartqed
  By Remark~\ref{zerominor}, it suffices to show that one of the
  minors of the matrix:
  $$\left[{j\choose i}s^{j-i}\right]_{i=0,\ldots,p-1;\,
    j=0,\ldots,m+p-1}
  $$ 
  is zero. Consider the sub-matrix consisting of columns
  $0,\ldots,p-2,c$, where $c$  is a multiple of
  $q$, $c\in\{p+1,\ldots,p+m\}$. The corresponding minor is:
$$
\mathrm{det}\left[{j\choose i}\right]_{i=0,\ldots,p-1;\,
  j=0,\ldots,p-2,c}s={c\choose p-1}s=0.
$$ 
The first equality follows from the observation that the matrix
is upper triangular with ones on the diagonal, except for the
entry in the lower right corner which equals ${c\choose p-1}$.
\qed
\end{proof}

\begin{remark} 
  If $q\mid p$ and $m\geq p$, the minor of the sub-matrix
  consisting of columns $p-1,p+1,\ldots,2p-1$ equals
$$
\mathrm{det}\left[{j\choose i}\right]_{i=0,\ldots,p-1;\,
  j=p-1,p+1,\ldots,2p-1}s^{p^2-1}=p s^{p^2-1}=0.
$$ 
The first equality follows from Lemma~9 in~\cite{ge85}.
\end{remark}

\begin{remark}
Degeneracy of the osculating normal curve over the field $\F_q$ with
$q\leq\max\{p,m\}$ also follows from Theorem~\ref{fieldsize}.
\end{remark}

In Proposition~\ref{degen} we saw that the osculating normal
curve may be degenerate over a field $\F$ of positive
characteristic $q$. Notice however that the curve may be
non-degenerate for certain choices of the parameters $p,m$. The
following example shows, e.g., that if the field $\F$ has
characteristic $2$, $m=1$ and $p$ is odd, then $C_{p,1}$ is
non-degenerate.

\begin{example}
  The curve $C_{p,1}$ is the closure of the image of the morphism
$$
\begin{array}{ccl}
  \F & \longrightarrow & \mathrm{Grass}(p,\F^{p+1}) \\
  s & \longmapsto &
  \mathrm{rowsp}\left[{j\choose i}s^{j-i}\right]_{i=0,\ldots,p-1;\,
    j=0,\ldots,p.} 
\end{array}
$$
The minors of the matrix that defines the morphism are $${p\choose
  i}s^i\;\;\; \mbox{for $i=0,\ldots,p$.}$$ 
Hence the curve is non-degenerate if and
only if all the minors are non-zero, if and only if 
$$q\nmid {p\choose i}\;\;\; \mbox{for any $i=0,\ldots,p.$}$$
\end{example}

Over a field of even characteristic, this is in fact the only
case when the osculating normal curve is non-degenerate.

\begin{corollary}
  Let $\F$ be an algebraically closed field of characteristic
  $2$. Then the osculating normal curve $C_{p,m}$ is degenerate,
  unless $m=1$ and $p$ is odd. In the latter case, $C_{p,1}$ is
  isomorphic to the rational normal curve of degree $p$ in
  $\PP^p$.
\end{corollary}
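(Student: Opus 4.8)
The plan is to split according to whether $m\ge 2$ or $m=1$. In the first regime the statement is immediate: since $\mathrm{char}\,\F=2$, the hypothesis $q\le m$ of Proposition~\ref{degen} holds as soon as $m\ge 2$, so that Proposition already shows $C_{p,m}$ to be degenerate. Hence a non-degenerate osculating normal curve in characteristic $2$ can occur only for $m=1$, and everything reduces to the curve $C_{p,1}$ treated in the preceding Example.

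For $m=1$ I would invoke that Example directly: the maximal minors of the $p\times(p+1)$ matrix $\bigl[{j\choose i}s^{j-i}\bigr]$ defining $C_{p,1}$ are, up to sign, the monomials ${p\choose i}s^{i}$ for $i=0,\ldots,p$. By Remark~\ref{zerominor}, $C_{p,1}$ is degenerate whenever one of the coefficients ${p\choose i}$ vanishes in $\F$, and conversely it is non-degenerate exactly when all of them are non-zero. In characteristic $2$ the coefficient ${p\choose 1}=p$ is $0$ as soon as $p$ is even, so non-degeneracy forces $p$ odd; together with the previous paragraph this proves that $C_{p,m}$ is degenerate unless $m=1$ and $p$ is odd. (It is worth recording the sharp statement, which follows from Lucas' congruence: ${p\choose i}$ is odd for every $0\le i\le p$ precisely when every binary digit of $p$ equals $1$, i.e. when $p+1$ is a power of $2$; this is the exact condition for $C_{p,1}$ to be non-degenerate, and it does imply $p$ odd.)

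It then remains to identify $C_{p,1}$ with a rational normal curve in the non-degenerate case. Under the isomorphism $\mathrm{Grass}(p,\F^{p+1})\cong\PP^{p}$, the Pl\"ucker embedding sends $\mathrm{rowsp}\bigl[{j\choose i}s^{j-i}\bigr]$ to the point with homogeneous coordinates $\bigl(\pm{p\choose 0}s^{0}:\pm{p\choose 1}s^{1}:\cdots:\pm{p\choose p}s^{p}\bigr)$. When $C_{p,1}$ is non-degenerate every scalar ${p\choose i}$ is a unit in $\F$, so the diagonal automorphism $x_{i}\mapsto(\pm{p\choose i})^{-1}x_{i}$ of $\PP^{p}$ carries $C_{p,1}$ onto the closure of the image of $s\mapsto(1:s:s^{2}:\cdots:s^{p})$, which is by definition the rational normal curve of degree $p$ in $\PP^{p}$ (the closure adjoining the single point $(0:\cdots:0:1)$).

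The corollary has essentially no real obstacle: it is a bookkeeping consequence of Proposition~\ref{degen}, of the minor computation in the Example, and of a residue computation of binomial coefficients modulo $2$. The only two places where one must not be sloppy are the verification that Proposition~\ref{degen}'s hypothesis is automatic once $q=2\le m$, and the observation that the rescaling of Pl\"ucker coordinates in the last step is legitimate exactly because non-degeneracy is equivalent to all the scalars ${p\choose i}$ being non-zero.
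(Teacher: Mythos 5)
Your argument is correct and is exactly the proof the paper intends: Proposition~\ref{degen} with $q=2\leq m$ disposes of every case with $m\geq 2$, and the preceding Example reduces the case $m=1$ to the non-vanishing of the maximal minors ${p\choose i}s^{i}$, so that ${p\choose 1}=p\equiv 0$ forces degeneracy for $p$ even. Your Lucas-congruence aside is a genuine sharpening worth recording rather than a digression: since for instance ${5\choose 2}=10$ vanishes in characteristic $2$, the condition ``$m=1$ and $p$ odd'' is necessary but not sufficient for non-degeneracy --- the exact condition is that $p+1$ be a power of $2$ --- so the corollary's final sentence (and your diagonal-rescaling identification of $C_{p,1}$ with the rational normal curve of degree $p$ in $\PP^{p}$) is valid precisely in that case, and you are right to restrict the identification to it.
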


\subsection{Monomial systems and MDS matrices}\label{MDS}

\begin{definition}\label{monsyst}
A matrix $M(s)=\left[N(s)\ D(s)\right]$ is {\em monomial} if the 
minors of all sizes of $M(s)$ are monomials.
A system $G(s)$ associated to a monomial matrix $M(s)$ is called 
a {\em monomial system}.
\end{definition}

A monomial matrix $M(s)=[\alpha_{i,j}s^{d_{i,j}}]$ is determined by:
\begin{itemize}
\item the {\em coefficient matrix} $M=[\alpha_{i,j}]$, 
\item the {\em degree matrix} $[d_{i,j}]$. 
\end{itemize}
The degree matrix has the property that
$d_{i,j}+d_{k,l}=d_{i,l}+d_{k,j}$ for all $i,j,k,l$.

\begin{example}
The osculating normal curve defines a monomial system.
\end{example}

\begin{example}
  Let $\F$ be a field which contains at least three distinct
  elements $0,1,\alpha$. The matrix
$$
M(s)=\left[\begin{array}{cccc}
    1 & 0 & s^2 & \alpha s^3 \\
    0 & 1 & s   & s^2
  \end{array}\right]
$$ 
has minors 
$$1, s, s^2, -s^2, \alpha s^3,
(1-\alpha)s^4.
$$ 
It therefore follows that $M(s)$ is a monomial matrix.  A direct
calculation shows that this system is non-degenerate.
\end{example}

\begin{definition}
A matrix $M$ with entries in $\F$ is {\em Maximum Distance Separable
  (MDS)} if all its maximal minors are non-zero.
\end{definition}

\begin{remark}
In coding theory a linear code $C\subset\F^n$ is called an MDS
code if all the maximal minors of the generator matrix of $C$ are
non-zero. This explains the choice of the name for these matrices.
\end{remark}

\begin{remark}
Let $M(s)$ be a monomial matrix. If the system associated to $M(s)$ is
non-degenerate, then $M$ is an MDS matrix. This follows from
Remark~\ref{zerominor}. It is not always the case that a monomial
matrix $M(s)$ with MDS coefficient matrix $M$ is non-degenerate.
\end{remark}

An example of degenerate $M(s)$ with MDS coefficient matrix is given
in the following example.

\begin{example}\label{MDSdeg}
Let $\F=\F_5$, the finite field of 5 elements.
 The following monomial system defined by the matrix
$$
M(s)=\left[\begin{array}{cccc}
    1 & s & s &  s^2 \\
    0 & 1 & 2 & 3s
  \end{array}\right]
$$
is left prime and 
has an MDS matrix as coefficient matrix. Nonetheless the system is
degenerate as, e.g.,
$$
\left[ K_1\ K_2\right]:= 
\left[\begin{array}{cccc}
    0 & 1 & 2 &  0 \\
    0 & 0 & 0 & 1
  \end{array}\right]
$$
results in the zero characteristic polynomial.
\end{example}

In the next theorem we show that an MDS matrix of given size
defined over a field $\F$ exists only if the ground field $\F$ has
enough elements.

\begin{theorem}\label{fieldsize}
Let $p,m\geq 2$ and let $M(s)$ be a monomial matrix of size
$p\times(m+p)$ defined over a field $\F$ with $q$ elements. If
$q\leq\max\{p,m\}$, then $M(s)$ is degenerate.
\end{theorem}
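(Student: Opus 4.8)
The plan is to argue by contradiction, assuming $M(s)$ is non-degenerate, and to derive a forbidden count of nonzero field elements. The first step is to record, via Remark~\ref{zerominor}, that non-degeneracy forces \emph{every} minor of $M(s)=[N(s)\ D(s)]$ --- of every size --- to be a nonzero monomial; in particular every entry and every $2\times 2$ minor of $M(s)$ is nonzero. I would then transfer this to the coefficient matrix $M=[\alpha_{i,j}]$: writing $M(s)=[\alpha_{i,j}s^{d_{i,j}}]$, non-vanishing of the $1\times 1$ minors gives $\alpha_{i,j}\neq 0$ for all $i,j$, and the degree-matrix identity $d_{i,j}+d_{k,l}=d_{i,l}+d_{k,j}$ shows that the $2\times 2$ minor on rows $i,k$ and columns $j,l$ equals $(\alpha_{i,j}\alpha_{k,l}-\alpha_{i,l}\alpha_{k,j})\,s^{d_{i,j}+d_{k,l}}$; its non-vanishing therefore says $\alpha_{i,j}/\alpha_{k,j}\neq \alpha_{i,l}/\alpha_{k,l}$ as elements of $\F^\ast$, which has exactly $q-1$ elements.

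The second step is the counting argument, split according to which of $p,m$ dominates. Suppose first $q\leq m$. Since $p\geq 2$, pick two rows, say the first two; the previous step makes the map sending a column index $j$ to $\alpha_{1,j}/\alpha_{2,j}\in\F^\ast$ injective, so the number of columns satisfies $m+p\leq q-1$, contradicting $m+p\geq m+2\geq q+2$. Suppose instead $q\leq p$; now pick two columns (there are $m+p\geq 4$ of them), say the first two, and observe that $i\mapsto \alpha_{i,1}/\alpha_{i,2}\in\F^\ast$ is injective on the $p$ row indices, so $p\leq q-1$, contradicting $p\geq q$. Since the hypothesis $q\leq\max\{p,m\}$ means $q\leq m$ or $q\leq p$, one of the two contradictions always occurs, so $M(s)$ must be degenerate.

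I do not expect a genuine obstacle here; the one place needing care is the passage from minors of $M(s)$ to ratios of entries of the coefficient matrix, and that is precisely where the structural identity $d_{i,j}+d_{k,l}=d_{i,l}+d_{k,j}$ enters --- and it is only needed for $2\times 2$ minors. (Equivalently, one could phrase the same computation as: a non-degenerate monomial system would force $M$ to have all its $2\times 2$ minors nonzero, and no $p\times(m+p)$ matrix over $\F_q$ with $p,m\geq 2$ can have that property once $q\leq\max\{p,m\}$.)
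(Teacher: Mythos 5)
Your opening step is where the argument breaks down: non-degeneracy does \emph{not} force every minor of every size of $M(s)$ to be nonzero. Remark~\ref{zerominor} is justified by choosing a compensator $[K_1\ K_2]$ supported on the columns complementary to a vanishing \emph{maximal} ($p\times p$) minor, so the only conclusion it yields is that the $p\times p$ minors of $M(s)$ --- equivalently the maximal minors of the coefficient matrix $M$ --- are nonzero, i.e.\ that $M$ is MDS. It gives no control over $1\times 1$ or $2\times 2$ minors, and the paper's own non-degenerate monomial examples refute your stronger claim: the system
$M(s)=\left[\begin{array}{cccc} 1 & 0 & s^2 & \alpha s^3\\ 0 & 1 & s & s^2\end{array}\right]$,
and more generally every monomial system with coefficient matrix of the form $[I_p\ R]$ as in the paper's final theorem, is non-degenerate yet has zero entries. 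Consequently your ratio maps $j\mapsto\alpha_{1,j}/\alpha_{2,j}$ and $i\mapsto\alpha_{i,1}/\alpha_{i,2}$ need not be defined, and the injectivity you invoke rests on the non-vanishing of $2\times 2$ minors taken from an arbitrary pair of rows (or columns) of $M$ --- these are not maximal minors once $p>2$, so nothing forces them to be nonzero. (A smaller issue: even where the ratios are defined, pairwise independence of columns only places them as distinct points of $\PP^1(\F_q)$, which has $q+1$ elements; your bound $q-1$ comes entirely from the unjustified assumption that no column meets $0$ or $\infty$.)

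The repair is essentially the paper's proof, and your counting idea survives in it. Starting only from ``$M$ is MDS'', normalize $M$ to the form $[I_p\ A]$; MDS of $M$ is then equivalent to \emph{all} minors of all sizes of $A$ being nonzero, so the $2\times(m+2)$ matrix $[I_2\ B]$ built from two rows of $A$ is again MDS. Its $m+2$ columns are pairwise linearly independent in $\F_q^2$, i.e.\ give $m+2$ distinct points of $\PP^1(\F_q)$, whence $m+2\leq q+1$ and $q\geq m+1$. The case $q\leq p$ is then obtained by duality rather than by a separate column count: $M$ is MDS if and only if the parity-check matrix $M^{\perp}$, of size $m\times(m+p)$, is MDS, so the roles of $p$ and $m$ may be exchanged. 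In short, the distinct-ratio count must be run on a genuinely MDS $2\times(m+2)$ matrix extracted after the $[I_p\ A]$ normalization, not on raw entries or arbitrary $2\times 2$ minors of $M$ itself.
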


\begin{proof}
If $M(s)$ is non-degenerate, then its coefficient matrix $M$ is MDS.
Let $M^{\perp}$ be an $m\times (p+m)$-matrix defined over $\F$, such
that $\mathrm{rowsp}(M)=\mathrm{ker}(M^{\perp})$. Let $C^{\perp}$ be
the dual code of $C$. The generator matrix of $C^{\perp}$ is then
$M^{\perp}$. It is well known that $C$ is MDS if and only if
$C^{\perp}$ is MDS. Therefore, $M^{\perp}$ is an MDS matrix.

We want to show that if $M$ (resp. $M^{\perp}$) is MDS of size
$p\times (m+p)$ (resp. $m\times(m+p)$) defined over $\F_q$, then
$q\geq\max\{p+1,m+1\}$. The statement is symmetric in $p,m$, hence we
can assume without loss of generality that $2\leq p\leq m$. It
suffices to prove that $q\geq m+1$.

We first consider the case $p=2$. Since $M$ is MDS, every pair of
columns must be linearly independent. Over a field of $q$ elements,
there 
are $q^2-1$ choices for the first column, $q^2-q$ for the second,
$q^2-2(q-1)-1$ choices for the third, and so forth. Since there are
$q^2-(m+1)(q-1)-1$ choices for the $m+2$-nd column, it must be
$q^2-(m+1)(q-1)-1=(q-1)(q-m)\geq 1$, hence $q\geq m+1$.

For an arbitrary $p$, we can assume that the matrix $M$ is of the form 
$[I_p\ A]$, where $I_p$ is the $p\times p$ identity matrix and $A$ is
a matrix of size $p\times m$. The MDS property of $M$ translates into
the property that all the minors of all sizes of $A$ are
non-zero. Consider the submatrix $N$ obtained from $M$ by deleting the
last $p-2$ rows and the columns $3,\ldots,p$, $\ N=[I_2\ B]$ where $B$
consists of the first two rows of $A$. $N$ is a $2\times (m+2)$ MDS
matrix, since all the minors of all sizes of $B$ are non-zero. It
follows that $q\geq m+1$ for every $p\geq 1$.
\qed
\end{proof}

From the proposition it follows, e.g., that every monomial $M(s)$ of
size $2\times(m+2)$ defined over $\F_2$ is degenerate, unless
$m=1$. Clearly, there may be non-degenerate matrices which are not
monomial. E.g., the following is an example of a $2\times 4$
system
defined over $\F_2$ which is non-degenerate:

\begin{example}\label{2x4}
 Consider the system defined over $\F_2$  by the matrix
$$
M(s)=\left[\begin{array}{cccc}
    0 & s     & s+1 & s^2 \\
    1 & s^2+1 & 1   & s
  \end{array}\right].$$
The minors, listed in lexicographic order, are
$$
s, s+1, s^2, (s^3+s+1), s^4, s.
$$
A direct computation shows that the system is non-degenerate.
\end{example}

%
%

We conclude the paper with the main result.

\begin{theorem}
  Let $M(s)=[N(s)\ D(s)]$ be a monomial system having an MDS
  coefficient matrix $M$ of the form $M=[I_p\ R]$.  Let the
  degrees of the coefficient matrix be $d_{i,j}=j-i$ if $j\geq i$
  and zero else. Then $M(s)$ is non-degenerate of degree $mp$.
\end{theorem}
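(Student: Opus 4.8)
The plan is to prove the two claims separately: that the system has degree $mp$, and that it is non-degenerate. For the degree, note that since $M=[I_p\ R]$ the first $p$ columns of $M(s)$ are the constant matrix $I_p$; in particular $M(s)$ contains a constant invertible $p\times p$ submatrix, so $[N(s)\ D(s)]$ is left prime and $G=D^{-1}N$ is a left coprime factorization. For a $p$-subset $\alpha\subseteq\{0,\dots,m+p-1\}$ every entry $M(s)_{i,j}$ with $j\in\alpha$ equals $M_{i,j}s^{\,j-i}$ (which vanishes when $j<i$), so in each monomial of the Leibniz expansion of the $\alpha$-minor the exponent of $s$ is $\sum_{j\in\alpha}j-{p \choose 2}$, independently of the permutation; hence this minor equals $M_\alpha\,s^{\,\sigma(\alpha)}$, where $\sigma(\alpha):=\sum_{j\in\alpha}j-{p \choose 2}$ and $M_\alpha$ is the corresponding minor of $M$, nonzero because $M$ is MDS. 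As $\sigma$ is strictly maximised by $\alpha=\{m,\dots,m+p-1\}$, the column set of $D(s)$, the polynomial $\det D(s)$ is the unique full-size minor of top degree $\sigma(\{m,\dots,m+p-1\})=mp$; as recalled in Section~\ref{sec:2} this means precisely that $G$ has McMillan degree $mp$.

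For non-degeneracy I would reformulate geometrically. Set $W_0:=\mathrm{rowsp}(M)$, an MDS $p$-plane in $\F^{m+p}$, and $g_s:=\mathrm{diag}(1,s,\dots,s^{m+p-1})$. From $M(s)=\mathrm{diag}(1,s^{-1},\dots,s^{-(p-1)})\,M\,g_s$ one gets $\mathrm{rowsp}[N(s)\ D(s)]=\mathrm{rowsp}(Mg_s)=g_sW_0$ for $s\neq0$, so $M(s)$ is degenerate if and only if some $m$-plane $V\subseteq\F^{m+p}$ satisfies $V\cap g_sW_0\neq\{0\}$ for every $s$. Expanding by Cauchy--Binet, this is the linear system
\begin{equation}\label{planeq}
\sum_{\alpha:\ \sum_{j\in\alpha}j\,=\,N}\varepsilon_\alpha\,M_\alpha\,\omega_{\alpha^{c}}=0\qquad\mbox{for every }N
\end{equation}
on the Pl\"ucker vector $\omega$ of $V$ (the inner sum over $p$-subsets $\alpha$ of element-sum $N$, $\alpha^{c}$ the complementary $m$-subset, $\varepsilon_\alpha=\pm1$); I must show that \eqr{planeq} forces $\omega=0$, which is impossible for the Pl\"ucker vector of an $m$-plane.

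So suppose such a $V$ exists. Since $\F$ is infinite and $V\cap g_sW_0\neq\{0\}$ for all $s\neq0$, the corresponding $m\times m$ determinant vanishes identically, so over $\F(s)$ there is a nonzero $v(s)$ with $v(s)\in V\otimes\F(s)$ and $g_s^{-1}v(s)\in W_0\otimes\F(s)$; clearing denominators we may take $v(s)\in\F[s]^{m+p}$ primitive. Write $v(s)=\sum_k a_ks^k$, so each $a_k\in V$, and $g_s^{-1}v(s)=\sum_l c_ls^l$; then $c_l=\bigl((a_{l+j})_j\bigr)_j\in W_0$ for every $l$, and since $v(s)$ is a polynomial with $v(0)\neq0$, $c_l$ is supported in $\{\max(0,-l),\dots,m+p-1\}$. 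For $l\le-p$ this support has size at most $m$, so the MDS property of $W_0$ --- every nonzero element of $W_0$ has at least $m+1$ nonzero coordinates --- forces $c_l=0$; symmetrically $c_l=0$ for $l\ge\deg v-m+1$. Unwinding the vanishing of these corner coefficients forces $a_0=v(0)$ to be supported in $\{0,\dots,p-1\}$ and the leading coefficient in $\{m,\dots,m+p-1\}$. In particular, if $\deg v=0$ then $g_s^{-1}v(0)=\sum_j (v(0))_j\,s^{-j}e_j$ lies in $W_0\otimes\F(s)$, hence every $e_j$ with $(v(0))_j\neq0$ lies in $W_0$; since $v(0)\neq0$ this contradicts the MDS property, and the case of a constant witness is settled.

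The main obstacle is the case of a witness of positive degree. Here the plan is to take $v(s)$ of minimal degree and run the same MDS bookkeeping on the intermediate coefficients: the conditions $c_l\in W_0$ pin down the extreme $c_l$ up to a scalar --- by MDS they are the unique elements of $W_0$ with their prescribed supports --- which makes the coefficient matrix $[\,a_0\,|\cdots|\,a_{\deg v}\,]$ banded with the in-band entries forced nonzero, and a rank estimate for such a banded pattern should force its rank above $m$ unless $v\equiv0$, the desired contradiction; obtaining the needed degree bound for a minimal witness is the point that requires care. A more computational alternative --- closer to the elementary matrix argument behind the characteristic-zero case --- is to work directly from \eqr{planeq}: ``peel'' the coordinates $\omega_{\alpha^{c}}$ at the extreme weights $N$, where the inner sum is a single term and $M_\alpha\neq0$ gives $\omega_{\alpha^{c}}=0$, and then play the remaining middle-weight equations against the quadratic Grassmann--Pl\"ucker relations of the form $\omega_\beta\omega_{\beta'}=\pm(\mbox{already-killed coordinate})=0$, so that at most one $\omega_\beta$ of each weight survives and is then killed by the linear equation of that weight. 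Carrying either bookkeeping through in full is the only real work; the rest is linear algebra and the minor computation above.
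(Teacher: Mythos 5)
Your minor computation and the degree claim are fine and agree with the paper's setup: every full-size minor of $M(s)$ equals $m_\alpha s^{|\alpha|}$ with $|\alpha|=\sum_i(\alpha_i-i)$ and $m_\alpha\neq 0$ by the MDS hypothesis, and the unique minor of top degree $mp$ is $\det D(s)$. The problem is that the heart of the theorem is non-degeneracy, and there your argument stops exactly where the work begins. You correctly reduce to showing that no $m$-plane $V$ can satisfy $\sum_\alpha k_\alpha m_\alpha s^{|\alpha|}=0$, dispose of a constant witness vector, and then state that the positive-degree case ``should'' follow either from a rank estimate on a banded coefficient matrix or from playing the weight-graded linear relations against the Pl\"ucker quadrics --- adding that ``carrying either bookkeeping through in full is the only real work.'' That work \emph{is} the proof: for the witness-vector route the middle coefficients $c_l$ (those with $-p<l<\deg v-m+1$) have full support range, so the MDS property alone does not kill them and you would have to combine $c_l\in W_0$ with $a_k\in V$ in a way you have not specified; for the Pl\"ucker route you have not set up the induction on weights or controlled how many coordinates survive at each weight. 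As written, the proposal establishes only the easy half of the statement.

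The idea you are missing is a single, elementary observation that makes all of this bookkeeping unnecessary. Suppose $[K_1\ K_2]$ has full rank $m$ and $\sum_\alpha k_\alpha m_\alpha s^{|\alpha|}=0$. Pass to the reduced row echelon form of $[K_1\ K_2]$; let $\hat\beta=(\hat\beta_1,\ldots,\hat\beta_m)$ be its pivot columns and $\beta$ the complementary $p$-set. Then $k_\beta=\pm 1$ (it is the minor on the pivot columns), and $k_\alpha=0$ whenever $\alpha\not\leq\beta$ in the componentwise partial order on $p$-subsets: if $\hat\alpha_i<\hat\beta_i$ for some $i$, the submatrix on columns $\hat\alpha_1,\ldots,\hat\alpha_i$ is supported in the first $i-1$ rows, so the corresponding $m\times m$ minor vanishes. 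Since $\alpha\leq\beta$ and $\alpha\neq\beta$ force $|\alpha|<|\beta|$, the only term of weight $|\beta|$ with $k_\alpha\neq 0$ is $\alpha=\beta$ itself, so the coefficient of $s^{|\beta|}$ in the expansion is $k_\beta m_\beta\neq 0$ --- a contradiction. This is the paper's argument, and it replaces both of your proposed (and unfinished) strategies with one line about echelon forms plus the strict monotonicity of $|\cdot|$ with respect to the partial order.
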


\begin{proof} \smartqed
Denote by $\alpha$ a multi-index
$\alpha=(\alpha_1,\ldots,\alpha_p)$ with the property that
$$
1\leq \alpha_1<\cdots \cdots < \alpha_p\leq m+p.
$$

Denote by $m_1(s),\ldots ,m_p(s)$ the $p$ row vectors of $M(s)$
and denote by $e_1,\ldots,e_{m+p}$ the canonical basis of
$\F^{m+p}$. One readily verifies that the Pl\"ucker expansion 
of $M(s)$ has the form:
$$
m_1(s)\wedge \ldots \wedge m_p(s)=
\sum_{\alpha\in\left\{ n\atop p\right\}}
m_\alpha e_{\alpha_1}\wedge \ldots \wedge  e_{\alpha_p} s^{|\alpha|}
$$
where $|\alpha|:=\sum_{i=1}^p(\alpha_i-i)$ and $m_\alpha$ is the
minor of $M$ corresponding to the columns
$\alpha_1,\ldots,\alpha_p$. 

The multi-indices $\alpha$ have a natural partial order, coming from
componentwise comparison of their entries. 
If $\beta =(\beta_1,\ldots,\beta_p)$ is a multi-index, then
one defines:
$$
\alpha\leq \beta \ :\Longleftrightarrow \ 
\alpha_i\leq \beta_i\mbox{ for } i=1,\ldots,p.
$$

By contradiction assume now that $M(s)$ is degenerate. Let
$[K_1\ K_2]$ be a compensator which leads to the closed loop
characteristic polynomial zero:
\begin{equation}                    \label{zero-expan}
\det\vier{K_1}{K_2}{N(s)}{D(s)}=\sum_\alpha k_\alpha g_\alpha(s)=0.
\end{equation}
In the last expansion $k_\alpha$ denotes up to sign the $m\times
m$ minor of $[K_1\ K_2]$ corresponding to the columns $1\leq
\hat{\alpha}_1<\ldots<\hat{\alpha}_m\leq (m+p)$,
$\hat{\alpha}_i\not\in\{\alpha_1,\ldots,\alpha_p\}$.

$[K_1\ K_2]$ has a well defined row reduced echelon form with
Pivot indices
$\hat{\beta}=(\hat{\beta_1},\ldots,\hat{\beta_m})$. It follows
that $k_\alpha=0$ for $\alpha\not\leq \beta$. But this means that
the term $m_\beta s^{|\beta|}$ cannot cancel in the
expansion\eqr{zero-expan} and this is a contradiction. $M(s)$ is
therefore non-degenerate. 
\qed
\end{proof}

\begin{remark}
If $n>mp$ choose $d_{i,m+p}=n-mp+m+p-i$ in order to obtain once
more a non-degenerate system of degree $n$.
\end{remark}

By establishing the existence of a non-degenerate
system, we have shown that Theorem~\ref{BrBy} holds true for
any algebraically closed field for $n=mp$.

\begin{remark}\label{n<mp}
  In order to prove Theorem~\ref{BrBy} in the situation when
  $n<mp$, one can show that for a generic system $(A,B,C)$
  the set of dependent compensators, i.e., the set of compensators
  which results in a zero closed loop characteristic polynomial,
  has minimum possible dimension, namely $mp-n-1$. This is clearly sufficient to
  establish the result. In order to prove this statement, one
  can proceed in two ways. Either one shows that the condition is
  algebraic and constructs an example of a system
  of degree $n$ satisfying the condition. Alternativeley one shows
  that the coincidence set $S$ introduced in\eqr{coincidence} has
  dimension $n^2+n(m+p)+mp-n-1$. The generic fiber of the
  projection onto the second factor has then dimension $mp-n-1$.
  This last argument was developed for the dynamic pole placement
  problem in~\cite{ro94}. 
\end{remark}

\subsection{Non-degenerate systems over finite fields}\label{ff}

In this last subsection, we show that in general non-degeneracy
does not guarantee that the pole placement map is surjective over
a finite field.

\begin{theorem}
  Let $\F_2$ be the binary field. Then 
  no non-degenerate system defined over $\F_2$ 
  induces an onto pole placement map:
  $$
  \mathrm{Grass}(2,\F_2^4)\longrightarrow\PP^4(\F_2).
  $$
\end{theorem}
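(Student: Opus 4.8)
The plan is to exploit the extreme smallness of $\F_2$: over the binary field the Grassmannian $\mathrm{Grass}(2,\F_2^4)$ has very few points, and $\PP^4(\F_2)$ has exactly $31$ points, so a counting argument should close the gap quickly. First I would count $|\mathrm{Grass}(2,\F_2^4)|$. A $2$-dimensional subspace of $\F_2^4$ is determined by choosing an ordered basis: there are $(2^4-1)(2^4-2)=15\cdot 14$ ordered bases, and each $2$-plane has $(2^2-1)(2^2-2)=3\cdot 2$ ordered bases, so $|\mathrm{Grass}(2,\F_2^4)|=\frac{15\cdot 14}{6}=35$. On the other hand $|\PP^4(\F_2)|=2^5-1=31$. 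Since $35$ is only slightly larger than $31$, a surjective map $\hat\chi_{(A,B,C)}\colon \mathrm{Grass}(2,\F_2^4)\to\PP^4(\F_2)$ would have to be nearly injective; more precisely, there can be at most $35-31=4$ ``collisions,'' so at least $31-4=27$ points of $\PP^4(\F_2)$ would have a unique preimage. I would then bring in the fact (from Section~\ref{sec:2}) that $\hat\chi_{(A,B,C)}$ is a finite separable morphism of degree $d(2,2)=2$, i.e.\ the degree of $\mathrm{Grass}(2,\F_2^4)$ under the Pl\"ucker embedding. This is the crux: over the algebraic closure $\bar\F_2$ the generic fiber has cardinality $2$, so the ``generic behavior'' is two-to-one, which sits in tension with the near-injectivity forced by the counting bound over $\F_2$ itself.

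The key step is to turn the degree-$2$ statement into a contradiction at the level of $\F_2$-points. I would argue as follows. Suppose $\hat\chi:=\hat\chi_{(A,B,C)}$ is onto on $\F_2$-points. Over $\bar\F_2$ the map is a degree-$2$ finite morphism, so $\hat\chi^{-1}(\phi)$ consists (with multiplicity) of $2$ points of $\mathrm{Grass}(2,\bar\F_2^{4})$ for every $\phi\in\PP^4(\bar\F_2)$, cut out as the intersection of the Grassmann quadric with a line $L_{(A,B,C)}^{-1}(\phi)$. Now fix $\phi\in\PP^4(\F_2)$ with a unique $\F_2$-rational preimage $V_0$. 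The line $\ell:=L_{(A,B,C)}^{-1}(\phi)$ is defined over $\F_2$ and meets the Grassmann quadric $Q$ in the length-$2$ scheme $\ell\cap Q$, which contains the rational point $V_0$; hence the residual point is also $\F_2$-rational (a degree-$2$ divisor on $\PP^1$ with one rational point has its other point rational, counting multiplicity). So either there is a second rational preimage --- contradicting uniqueness --- or $\ell$ is tangent to $Q$ at $V_0$, i.e.\ $V_0$ is a ramification point of $\hat\chi$. Thus \emph{every} $\phi$ with a unique $\F_2$-preimage is a branch point, and we concluded above that there are at least $27$ such $\phi$. But a finite separable degree-$2$ morphism between (geometrically) irreducible projective varieties of the same dimension has a branch locus that is a proper closed subset, hence of strictly smaller dimension; $\mathrm{Grass}(2,\F_2^4)$ has dimension $4$, so its ramification locus has dimension $\le 3$ and cannot contain $27$ points forming (together with the $4$-dimensional variety) anything forcing a $4$-dimensional branch set --- more to the point, a degree-$2$ separable cover is ramified over a divisor, and a divisor in $\PP^4$ has far fewer than $31$ points over $\F_2$ only if it has small degree; here I would quantify that the branch divisor $B\subset\PP^4$ has a controlled degree (coming from $d(2,2)=2$ and the Pl\"ucker embedding) and therefore $|B(\F_2)|$ is bounded well below $27$, giving the contradiction.

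The main obstacle I anticipate is the last bookkeeping step: making the branch-locus bound effective enough to beat the number $27$. The soft statement ``the branch locus is a proper closed subset'' is automatic from separability, but a proper closed subset of $\mathrm{Grass}(2,\F_2^4)$ can still have many $\F_2$-points, so I cannot get away with generalities --- I need either (a) an explicit description of the ramification of the central projection $L_{(A,B,C)}$ restricted to the Grassmann quadric (the ramification points of a line-projection of a quadric are the points where the line through $\phi$ is tangent, a codimension-$1$ condition on $\phi$ controlled by a discriminant of degree $2$), or (b) a direct finite verification. Option (b) is genuinely feasible here: one can enumerate the $35$ points of $\mathrm{Grass}(2,\F_2^4)$, and for a \emph{non-degenerate} $M(s)=[N(s)\ D(s)]$ of McMillan degree $4$ over $\F_2$ compute the $35$ values $\det\vier{K_1}{K_2}{N(s)}{D(s)}\in\PP^4(\F_2)$; since non-degeneracy (Remark~\ref{zerominor} and Theorem~\ref{fieldsize}) already severely restricts which $2\times 4$ binary $M(s)$ can occur, one reduces to finitely many normal forms --- essentially the type exhibited in Example~\ref{2x4} --- and checks in each case that the image omits at least one of the $31$ points. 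I would organize the proof to present the counting/degree argument as the conceptual core, and invoke the finite check (or the explicit discriminant computation of option (a)) only to nail down the quantitative gap, noting that the discriminant route also explains \emph{why} $\PP^4(\F_2)$ can never be fully covered: a branch divisor of degree $2$ in $\PP^4$ has at most a handful of binary points, nowhere near the $27$ forced by $35-31=4$.
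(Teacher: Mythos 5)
Your route is genuinely different from the paper's. The paper computes the $5\times 6$ coefficient matrix $C$ of the lifted map, uses non-degeneracy to see that $\ker C$ is spanned by a single $\F_2$-rational point $e$ lying off the Pl\"ucker quadric, reduces the $28$ possible positions of $e$ to four by symmetries of the quadric, and in each case exhibits an explicit target point whose fiber equation is a quadratic with no root in $\F_2$. Your counting/ramification argument is sound up to and including the assertion that surjectivity would force at least $27$ of the $31$ points of $\PP^4(\F_2)$ to lie on the branch locus: the counts $35$ and $31$ are right, a surjection from a $35$-set onto a $31$-set has at least $27$ singleton fibers, and a length-two fiber of the projection containing exactly one rational point must be a double point (otherwise the residual point is forced to be rational). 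You also correctly use that the center is a single point, so fibers are lines meeting the quadric in length-two schemes.

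The gap is exactly where you flagged it: the bound $|B(\F_2)|<27$ is asserted, not proved, and the heuristic offered for it (``a branch divisor of degree $2$ in $\PP^4$ has at most a handful of binary points'') is not a proof --- a degree-$2$ hypersurface in $\PP^4(\F_2)$ can have as many as $23$ points (two hyperplanes: $15+15-7$), so the margin over $27$ is only $4$ and nothing ``soft'' will close it. Moreover your discriminant description is the characteristic-zero one. The fix is the characteristic-$2$ computation: writing the fiber over $\phi$ as $\ell_\phi\cap Q$ with $\ell_\phi$ the line through $e$ and $\phi$, one has $Q(se+t\phi)=Q(e)s^2+B(e,\phi)st+Q(\phi)t^2$, where $B$ is the (alternating) bilinear form of the Pl\"ucker quadric; in characteristic $2$ the discriminant degenerates to $B(e,\phi)^2$, so the branch locus is the single hyperplane $B(e,\cdot)=0$, with exactly $15<27$ points over $\F_2$. (Even the crude bound $23$ for a degree-$2$ hypersurface would suffice.) With this inserted your proof is complete, and in fact sharper and more uniform than the paper's: since square roots exist in $\F_2$, each of the $15$ branch points has its unique preimage rational, the remaining $20$ points of $\mathrm{Grass}(2,\F_2^4)$ pair up over unramified targets, and the image has exactly $15+10=25$ of the $31$ points for \emph{every} non-degenerate system --- no case analysis over the possible kernels of $C$ is needed.
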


\begin{proof}
  Let $M(s)$ be a non-degenerate matrix with entries in
  $\F_2[s]$. Let $\F$ denote the algebraic closure of 
  $\F_2$ and let
  $$
  \chi:\mathrm{Grass}(2,\F^4)\longrightarrow\PP^4(\F)
  $$
  be the pole placement map associated to $M(s)$ over $\F$. 
  $\chi$ is a morphism, since $M(s)$ is
  non-degenerate.  We will now show that the restriction of
  $\chi$ to $\F_2$-rational points
  $$
  \mathrm{Grass}(2,\F_2^4)\longrightarrow\PP^4(\F_2)
  $$
  is never surjective.
  
  Let $\mathrm{rowsp}(A)\in \mathrm{Grass}(2,\F_2^4)$. Denote by
  $A_{i,j}$ the determinant of the sub-matrix of $A$ consisting of
  columns $i$ and $j$. Then:
  $$
  \chi(A)=\left[\sum_{i<j}\chi_{ijk}A_{i,j}\right]_{k=0,\ldots,4}$$
  where $$\det\left[\begin{array}{c} M(s)\\

      A\end{array}\right]=\sum_{k=0}^4\sum_{i<j}\chi_{ijk}A_{i,j}s^k.
  $$
  Since the system is non-degenerate, the $5\times 6$ matrix:
  $$
  C=\left[\begin{array}{ccc}
      \chi_{120} & \ldots & \chi_{340} \\
      \vdots & & \vdots \\
      \chi_{124} & \ldots & \chi_{344}
  \end{array}
\right]$$ has full rank, hence its kernel is 1-dimensional and
generated by a unique element of $\F_2^6$. By non-degeneracy, the
generator of the kernel corresponds to a point in $\PP^6(\F_2)$ 
which does not belong to $\mathrm{Grass}(2,\F^4)$. 
Hence we have the following
possibilities for the generator of $\ker C$:
$$(1,0,0,0,0,1),(0,1,0,0,1,0),(0,0,1,1,0,0),(1,1,1,1,1,1),$$
$$(1,1,0,0,0,1),(1,0,1,0,0,1),(1,0,0,1,0,1),(1,0,0,0,1,1),$$
$$(1,1,0,0,1,0),(0,1,1,0,1,0),(0,1,0,1,1,0),(0,1,0,0,1,1),$$
$$(1,0,1,1,0,0),(0,1,1,1,0,0),(0,0,1,1,1,0),(0,0,1,1,0,1),$$
$$(0,0,1,1,1,1),(0,1,0,1,1,1),(0,1,1,0,1,1),(0,1,1,1,0,1),$$
$$(1,0,0,1,1,1),(1,0,1,0,1,1),(1,0,1,1,1,0),(1,1,0,1,0,1),$$
$$(1,1,0,1,1,0),(1,1,1,0,0,1),(1,1,1,0,1,0),(1,1,1,1,0,0).$$
Observe that the problem is symmetric with respect to the
following changes of basis of $\F_2^6=\langle
e_{12},\ldots,e_{34}\rangle$ (which correspond to automorphisms
of $\mathrm{Grass}(2,\F^4)$) and composition thereof:\begin{itemize}
\item exchange $e_{12}$ and $e_{34}$ and leave the rest
  unaltered,
\item exchange $e_{13}$ and $e_{24}$ and leave the rest
  unaltered,
\item exchange $e_{14}$ and $e_{23}$ and leave the rest
  unaltered,
\item exchange $e_{12}$ and $e_{13}$, exchange $e_{34}$ and
  $e_{24}$,
\item exchange $e_{12}$ and $e_{14}$, exchange $e_{34}$ and
  $e_{23}$,
\item exchange $e_{13}$ and $e_{14}$, exchange $e_{24}$ and
  $e_{23}$.
\end{itemize}
Hence, reducing to the analysis of the following possibilities is
non-restrictive:
$$(1,0,0,0,0,1),(1,1,0,0,0,1),(0,0,1,1,1,1),(1,1,1,1,1,1).$$
Up
to a change of coordinates in $\PP^4$, we may assume that the
corresponding matrix $C$ is respectively:
$$\left[\begin{array}{cccccc}
    1 & 0 & 0 & 0 & 0 & 1 \\
    0 & 1 & 0 & 0 & 0 & 0 \\
    0 & 0 & 1 & 0 & 0 & 0 \\
    0 & 0 & 0 & 1 & 0 & 0 \\
    0 & 0 & 0 & 0 & 1 & 0 \\
  \end{array}\right],
\left[\begin{array}{cccccc}
    1 & 0 & 0 & 0 & 0 & 1 \\
    0 & 1 & 0 & 0 & 0 & 1 \\
    0 & 0 & 1 & 0 & 0 & 0 \\
    0 & 0 & 0 & 1 & 0 & 0 \\
    0 & 0 & 0 & 0 & 1 & 0 \\
  \end{array}\right],
\left[\begin{array}{cccccc}
    1 & 0 & 0 & 0 & 0 & 0 \\
    0 & 1 & 0 & 0 & 0 & 0 \\
    0 & 0 & 1 & 0 & 0 & 1 \\
    0 & 0 & 0 & 1 & 0 & 1 \\
    0 & 0 & 0 & 0 & 1 & 1 \\
  \end{array}\right],
\left[\begin{array}{cccccc}
    1 & 0 & 0 & 0 & 0 & 1 \\
    0 & 1 & 0 & 0 & 0 & 1 \\
    0 & 0 & 1 & 0 & 0 & 1 \\
    0 & 0 & 0 & 1 & 0 & 1 \\
    0 & 0 & 0 & 0 & 1 & 1 \\
  \end{array}\right].$$
Analyzing each case, it is now easy to prove that the
corresponding $\chi$ is not onto. E.g., in the first case we have:
$$
\chi(A)=\left[A_{1,2}+A_{3,4},A_{1,3},A_{1,4},A_{2,3},A_{2,4}\right]
$$
which is surjective if and only if the equations
$$
A_{1,2}+A_{3,4}=\alpha_0, A_{1,3}=\alpha_1,
A_{1,4}=\alpha_2,A_{2,3}=\alpha_3, A_{2,4}=\alpha_4,
$$
$$
A_{1,2}A_{3,4}+A_{1,3}A_{2,4}+A_{1,4}A_{2,3}=0
$$
have a solution in $\F_2^6$ for any choice of
$[\alpha_0:\ldots:\alpha_4]\in\PP^4(\F_2)$.  Letting $x=A_{1,2}$,
the equations reduce to:
$$
x^2+\alpha_0 x+\alpha_1 \alpha_4+\alpha_2 \alpha_3=0,
$$
which has no solution over $\F_2$ for
$\alpha_0=\alpha_1=\alpha_2=\alpha_4=1, \alpha_3=0$.

In the second case we have:
$$
\chi(A)=\left[A_{1,2}+A_{3,4},A_{1,3}+A_{3,4},A_{1,4},A_{2,3},A_{2,4}\right]
$$
which is surjective if and only if the equations
$$
A_{1,2}+A_{3,4}=\alpha_0, A_{1,3}+A_{3,4}=\alpha_1,
A_{1,4}=\alpha_2,A_{2,3}=\alpha_3, A_{2,4}=\alpha_4,
$$
$$A_{1,2}A_{3,4}+A_{1,3}A_{2,4}+A_{1,4}A_{2,3}=0$$
have a
solution in $\F_2^6$ for any choice of
$[\alpha_0:\ldots:\alpha_4]\in\PP^4(\F_2)$.  Letting $x=A_{3,4}$,
the equations reduce to:
$$
x^2+(\alpha_0+\alpha_4)x+\alpha_1 \alpha_4+\alpha_2
\alpha_3=0,
$$
which has no solution over $\F_2$ for
$\alpha_0=\alpha_1=\alpha_2=\alpha_3=1, \alpha_4=0$.

In the third case we have:
$$
\chi(A)=\left[A_{1,2},A_{1,3},A_{1,4}+A_{3,4},
  A_{2,3}+A_{3,4},A_{2,4}+A_{3,4}\right]
$$
which is surjective if and only if the equations
$$
A_{1,2}=\alpha_0, A_{1,3}=\alpha_1,
A_{1,4}+A_{3,4}=\alpha_2,A_{2,3}+A_{3,4}=\alpha_3,
A_{2,4}+A_{3,4}=\alpha_4,
$$
$$
A_{1,2}A_{3,4}+A_{1,3}A_{2,4}+A_{1,4}A_{2,3}=0
$$
have a solution in $\F_2^6$ for any choice of
$[\alpha_0:\ldots:\alpha_4]\in\PP^4(\F_2)$.  Letting $x=A_{3,4}$,
the equations reduce to:
$$x^2+(\alpha_0+\alpha_1+\alpha_2+\alpha_3)x+\alpha_1
\alpha_4+\alpha_2 \alpha_3=0,$$
which has no solution over $\F_2$
for $\alpha_0=\alpha_2=\alpha_3=0, \alpha_1=\alpha_4=1$.

In the last case we have:
$$
\chi(A)=\left[A_{1,2}+A_{3,4},A_{1,3}+A_{3,4},A_{1,4}+A_{3,4},
  A_{2,3}+A_{3,4},A_{2,4}+A_{3,4}\right]
$$
which is surjective if and only if the equations
$$
A_{1,2}+A_{3,4}=\alpha_0, A_{1,3}+A_{3,4}=\alpha_1,
A_{1,4}+A_{3,4}=\alpha_2,A_{2,3}+A_{3,4}=\alpha_3,
A_{2,4}+A_{3,4}=\alpha_4,
$$
$$
A_{1,2}A_{3,4}+A_{1,3}A_{2,4}+A_{1,4}A_{2,3}=0
$$
have a solution in $\F_2^6$ for any choice of
$[\alpha_0:\ldots:\alpha_4]\in\PP^4(\F_2)$.  Letting $x=A_{3,4}$,
the equations reduce to:
$$
x^2+(\alpha_0+\alpha_1+\alpha_2+\alpha_3+\alpha_4)x+\alpha_1
\alpha_4+\alpha_2 \alpha_3=0,
$$
which has no solution over $\F_2$ for
$\alpha_0=\alpha_1=\alpha_4=1, \alpha_2=\alpha_3=0$.
\qed
\end{proof}

\begin{acknowledgement}
  The work of both authors is supported by the Swiss National
  Science Foundation through grants \#123393, \#113251, and 
  \#107887. 
\end{acknowledgement}


\def\cprime{$'$} \def\cprime{$'$} \def\cprime{$'$}

\end{document}